\def\fq{\mathbb{F}_q}
\def\cC{\mathcal C}
\def\cQ{\mathcal Q}
\def\cX{\mathcal X}
\def\cY{\mathcal Y}
\def\K{\mathbb{K}}
\def\div{{\rm div}}
\def\fqpr{\mathbb{F}_{q^{'}}}
\newtheorem{lemma}{Lemma}
\newtheorem{theorem}{Theorem}
\newtheorem{proposition}{Proposition}
\newtheorem{corollary}{Corollary}
\begin{document}

\title{Small complete caps from nodal cubics}
\thanks{This research was supported by the Italian Ministry MIUR, 
Geometrie di Galois e strutture di incidenza,
PRIN 2009--2010, 
by INdAM, and by Tubitak Proj. Nr. 111T234.}
\author{Nurdag\"ul Anbar         \and Daniele Bartoli \and Irene Platoni \and
        Massimo Giulietti 
}


\thanks{Nurdag\"ul Anbar -
Faculty of Engineering and Natural Sciences - Sabanci University\\ Orhanli-Tuzla - 34956 Istanbul - Turkey.  
              \email{nurdagul@su.sabanciuniv.edu}}           
           
\thanks{Daniele Bartoli -
           Dipartimento di Matematica e Informatica - University of Perugia\\ Via Vanvitelli 1 - 06123 Perugia - Italy.
              \email{bartoli@dmi.unipg.it}}

\thanks{Irene Platoni -
Dipartimento di Matematica - University of Trento\\ Via Sommarive, 14 - 38123, Povo (TN) -  Italy. \email{irene.platoni@unitn.it}}

\thanks{           Massimo Giulietti -
           Dipartimento di Matematica e Informatica - University of Perugia\\ Via Vanvitelli 1 - 06123 Perugia - Italy.
              \email{giuliet@dmi.unipg.it}
}


\maketitle
\begin{abstract}
Bicovering arcs in Galois affine planes of odd order  are a powerful tool for constructing  complete caps in spaces of higher dimensions. 
In this paper we investigate whether some  arcs contained in nodal cubic curves 
are bicovering. For $m_1$, $m_2$ coprime divisors of $q-1$, 
bicovering arcs in $AG(2,q)$ 
of size  $k\le (q-1)\frac{m_1+m_2}{m_1m_2}$
are obtained, provided that 
$(m_1m_2,6)=1$ and $m_1m_2<\sqrt[4]{q}/3.5$.
Such arcs produce complete caps of size 
$kq^{(N-2)/2}$
in affine spaces of dimension $N\equiv 0 \pmod 4$. For infinitely many $q$'s
these caps are the smallest known  complete caps in $AG(N,q)$, $N \equiv 0 \pmod 4$.
\keywords{Galois affine spaces \and Bicovering arcs \and Complete caps \and Quasi-perfect codes \and Cubic curves}
\end{abstract}

\section{Introduction}

In a (projective or affine)  space over
the finite field with $q$ elements $\fq$, a {\em cap} is a set of points
no three of which collinear; plane caps are usually called {\em arcs}. A cap  is said
to be {\em complete} if it is not contained in a larger cap.
The problem of determining the spectrum of sizes of complete caps in a given space
has been intensively investigated, also  in connection with Coding Theory.
In fact, complete caps of size $k$ in the $N$-dimensional projective space $PG(N,q)$ with
$k>N+1$ and linear quasi-perfect $[k,k-N-1,4]$-codes over $\fq$
are equivalent objects; see e.g. \cite{GDT}.
For fixed $N$ and $q$, the smaller is a complete cap, the better are the covering properties of the corresponding code; see e.g. \cite{GPIEE}.

The trivial lower bound for the size of a complete cap in a Galois space of dimension $N$ and order $q$ is 
\begin{equation}\label{TrivialLB}
\sqrt 2  q^{(N-1)/2}.
\end{equation}
If $q$ is even and $N$ is odd, such bound is substantially sharp; see \cite{MR1395760}. Otherwise, all known infinite families of complete caps have size far from \eqref{TrivialLB}; see the survey papers \cite{HS1,HirschfeldStorme2001} and the more recent works \cite{ABGP,Anb-Giu,BarGiuPrep,MR2656392,Gjac,Gjcd,GPIEE}.

For $q$ odd, a lifting method for constructing complete caps in higher dimensions was proposed in \cite{Gjac}. It is based on the notion of a bicovering arc in a  Galois affine plane $AG(2,q)$; see Section \ref{bicovar} here. An arc $A$ in $AG(2,q)$ is said to be bicovering if every point $P$ off A is covered by at least two secants of $A$,
in such a way that $P$ is external to the segment cut out by one of the secants
but it is internal when the other secant is considered.
As first noticed in \cite{Gjac}, if there exists a bicovering arc of size $k$ in $AG(2,q)$, then a complete cap of size $kq^{(N-2)/2}$ can be constructed in $AG(N,q)$ for each positive $N\equiv 0 \pmod 4$. 

Cubic curves have recently emerged as a useful tool to construct small bicovering arcs \cite{ABGP,Anb-Giu,FaPaSc}. Let $G$ denote 
the abelian group  of the non-singular $\fq$-rational points of an irreducible plane cubic $\cX$ defined over $\fq$. It was already noted by Zirilli \cite{ZIR} that no three points in a coset of a subgroup $K$ of $G$  can be collinear, provided that the index $m$ of $K$ in $G$ is not divisible by $3$. Since then, arcs in cubics have been thoroughly investigated; see e.g. \cite{GFFA,HV,MR792577,MR1221589,TamasRoma,VOL,MR1075538}. It is easily seen that a necessary condition for the union of some cosets of $K$ to be a bicovering arc is that they form a maximal $3$-independent subset of the factor group $G/K$. The possibility of obtaining small bicovering arcs via this method
was investigated in \cite{ABGP} for  $\cX$ singular with a cusp, and in \cite{Anb-Giu} when $\cX$ is a non-singular (elliptic) curve. As a result, general constructions of bicovering arcs of size less than $2pq^{7/8}$ \cite{ABGP} and $q/3$ \cite{Anb-Giu} were provided.

In this paper we deal with cubics with an $\fq$-rational node. We prove that
if $m$ is a divisor of $q-1$ coprime to $6$ and such that $m\le \sqrt[4]{q}/3.5$, 
then the union of the cosets of $K$ corresponding to a maximal $3$-independent subset in $G/K$ is a bicovering arc; see Theorem \ref{pre14mar} in Section \ref{penultima}.
A similar result for $\cX$ elliptic was obtained in \cite{Anb-Giu},  under the further assumption that $m$ is a prime; see \cite[Theorem 1]{Anb-Giu}. 
%
However, it should be remarked that allowing $m$ to be a composite integer is a crucial improvement. In fact, for $m$ a generic prime, the smallest known maximal $3$-independent subsets in the cyclic group of order $m$ have size approximately $m/3$ \cite{MR1075538}; on the other hand, when $m=m_1m_2$ with $(m_1,m_2)=1$ and $(m,6)=1$, maximal $3$-independent subsets of size less than or equal to $m_1+m_2$ can be easily constructed; see \cite{MR1221589}. 

The main achievements of the present paper are summarized by the following result.

\begin{theorem}\label{duedue}
Let $q=p^h$ with $p>3$, and 
let $m$ be a proper divisor of $q-1$ such that $(m,6)=1$ and 
$m\le \frac{\sqrt[4]q}{3.5}$. Assume that  $m=m_1m_2$ with $(m_1,m_2)=1$. Then  
\begin{itemize}
\item[{\rm{(i)}}]
there exists a bicovering arc in $AG(2,q)$ of size less than or equal to
$$
\frac{(m_1+m_2)(q-1)}{m_1m_2};
$$
\item[{\rm{(ii)}}] for $N\equiv 0 \pmod 4$, $N\ge 4$, there exists a complete cap in $AG(N,q)$ of size less than or equal to
$$
\frac{(m_1+m_2)(q-1)}{m_1m_2}q^{\frac{N-2}{2}}.
$$
\end{itemize}
\end{theorem}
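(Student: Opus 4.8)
The plan is to obtain Theorem~\ref{duedue} by combining four facts, none of which requires new geometry: the group structure of a nodal cubic with a rational node, an explicit construction of small maximal $3$-independent subsets of $\mathbb{Z}/m\mathbb{Z}$ when $m$ splits into coprime factors, Theorem~\ref{pre14mar}, and the lifting construction of \cite{Gjac} recalled in the Introduction. Thus I would present Theorem~\ref{duedue} essentially as the package obtained by feeding the combinatorial construction into Theorem~\ref{pre14mar} and then into the lifting.

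For part (i), I would fix a plane cubic $\cX$ over $\fq$ whose only singular point is a node with $\fq$-rational tangent lines. Then the group $G$ of non-singular $\fq$-rational points of $\cX$ is isomorphic to $\fq^*$, hence cyclic of order $q-1$ (this is where $p>3$, together with the rationality of the node and of its tangents, is used). Since $m$ is a proper divisor of $q-1$, there is a unique subgroup $K\le G$ of index $m$, and $G/K$ is cyclic of order $m=m_1m_2$. As $(m_1,m_2)=1$, the Chinese Remainder Theorem gives $G/K\cong\mathbb{Z}/m_1\mathbb{Z}\times\mathbb{Z}/m_2\mathbb{Z}$, and since $(m,6)=1$ the construction of \cite{MR1221589} yields a maximal $3$-independent subset $S\subseteq G/K$ with $|S|\le m_1+m_2$. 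Let $A\subseteq AG(2,q)$ be the union of the $|S|$ cosets of $K$ indexed by the elements of $S$. The hypotheses $(m,6)=1$ and $m\le\sqrt[4]{q}/3.5$ are exactly those required by Theorem~\ref{pre14mar}, so $A$ is a bicovering arc, and
\[
|A|=|S|\cdot\frac{q-1}{m}\le (m_1+m_2)\cdot\frac{q-1}{m_1m_2},
\]
which is the claimed bound.

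For part (ii), I would invoke the lifting method of \cite{Gjac}: from a bicovering arc of size $k$ in $AG(2,q)$ one constructs a complete cap of size $k\,q^{(N-2)/2}$ in $AG(N,q)$ for every positive $N\equiv 0\pmod 4$. Taking $k=|A|$ with $A$ as in part (i), and using the bound just derived, gives a complete cap in $AG(N,q)$ of size at most $\frac{(m_1+m_2)(q-1)}{m_1m_2}\,q^{(N-2)/2}$. The only real content — hence the main obstacle — lies entirely in Theorem~\ref{pre14mar}, proved in Section~\ref{penultima}: one must upgrade the combinatorially necessary condition (the cosets forming a maximal $3$-independent set) to a \emph{sufficient} one for bicoverage, i.e.\ bound, for an arbitrary point $P$ off $A$, the number of secants of $A$ through $P$ cutting an ``internal'' chord versus an ``external'' one. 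I expect this to proceed via character-sum estimates on auxiliary curves attached to $\cX$ and a Hasse--Weil argument, with $(m,6)=1$ controlling the irreducibility/splitting of those curves and $m\le\sqrt[4]{q}/3.5$ keeping the error terms below the main terms. Everything else here — cyclicity of $G$, the CRT splitting, the explicit $3$-independent sets, and the lifting — is bookkeeping.
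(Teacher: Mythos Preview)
Your proposal is correct and matches the paper's own derivation: the paper obtains Theorem~\ref{duedue} from Corollary~\ref{lastbut1} (itself an immediate consequence of Theorem~\ref{pre14mar} and Proposition~\ref{mainP}) together with the Sz\H{o}nyi construction \cite{MR1221589} of a maximal $3$-independent subset of size at most $m_1+m_2$ in $\mathbb{Z}/m_1\mathbb{Z}\times\mathbb{Z}/m_2\mathbb{Z}$. The only small step you elide is that Theorem~\ref{pre14mar} is stated under the arithmetic hypothesis~\eqref{aritmetica1} rather than $m\le\sqrt[4]{q}/3.5$; the paper checks (just before Corollary~\ref{lastbut1}) that the latter implies the former.
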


When $p$ is large, the value $(m_1+m_2)/m_1m_2$,
where $m_1$, $m_2$ are coprime divisors of $q-1$
with $(m_1m_2,6)=1$ and $m_1m_2<\sqrt[4]{q}/3.5$,
can be significantly smaller than both $2p/q^{1/8}$ and $1/3$. This certainly happens for infinite values of $q$; see  Section \ref{confronto}. In this cases, Theorem \ref{duedue} provides the smallest bicovering arcs known up to now, and hence
 the smallest known complete caps in $AG(N.q)$ with $N\equiv 0 \pmod 4$.

Our proofs heavily rely on concepts and results from both Algebraic Geometry in positive characteristic and  Function Field Theory.  In particular, a crucial role is played by the family of plane curves investigated in Section \ref{sec22}.




With regard to the problem of constructing bicovering arcs contained in singular cubics, it should be noted that the present paper together with \cite{ABGP} leave just one case open,
namely that of a cubic with an isolated double point, which is currently under investigation by the same authors.   

\section{Preliminaries}

Let $q$ be an odd prime power, and let $\fq$ denote the finite field with $q$ elements. Throughout the paper,  $\K$ will denote the algebraic closure of $\fq$.

\subsection{Complete caps from bicovering arcs}\label{bicovar}

Throughout this section, $N$ is assumed to be a positive integer
divisible by $4$. Let $q'=q^\frac{N-2}{2}$. Fix a basis of $\fqpr$
as a linear space over $\fq$, and identify points in $AG(N,q)$
with vectors of $\fqpr\times \fqpr \times \fq \times \fq$. 

For an arc $A$ in $AG(2,q)$, let
$$C_A=\{(\alpha,\alpha^2,u,v)\in AG(N,q)\mid \alpha \in
\fqpr\,,\,\,\,(u,v)\in A\}\,.$$
As noticed in \cite{Gjac}, the set $C_A$ is a cap whose completeness in $AG(N,q)$ depends on the bicovering properties of $A$ in $AG(2,q)$, defined as follows.
According to Segre \cite{MR0362023}, given three pairwise
distinct  points $P,P_1,P_2$ on a line $\ell$ in $AG(2,q)$, $P$ is
external or internal to the segment $P_1P_2$ depending on whether
\begin{equation}\label{exto}
(x-x_1)(x-x_2)\quad \text{is a non-zero square  or
a non-square in }\fq,
\end{equation}
 where $x$,
$x_1$ and $x_2$ are the coordinates of $P$, $P_1$ and $P_2$ with
respect to any affine frame of $\ell$. 
\begin{def}\label{bico}
Let $A$ be a complete arc in $AG(2,q)$. A point $P\in AG(2,q)\setminus A$ is said to be bicovered by $A$ if there exist $P_1,P_2,P_3,P_4\in A$ such that
$P$ is both external to the segment $P_1P_2$ and internal to the segment $P_3P_4$. If every $P\in AG(2,q)\setminus A$ is bicovered by $A$, then $A$ is said to be a bicovering arc. 
\end{def}

A key tool in this paper is the following result from \cite{Gjac}.
\begin{proposition}\label{mainP}
If $A$ is a bicovering $k$-arc, then $C_A$ is a complete cap in $AG(N,q)$ of size $kq^{(N-2)/2}$. 
\end{proposition}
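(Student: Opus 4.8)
The plan is to show that $C_A$ is a cap (no three collinear points) and that it is complete, i.e. every point of $AG(N,q)$ not on $C_A$ lies on a secant of $C_A$. Write a generic point of $AG(N,q)$ as $(\mathbf{z},\mathbf{w},s,t)$ with $\mathbf{z},\mathbf{w}\in\fqpr$ and $s,t\in\fq$, and a generic point of $C_A$ as $(\alpha,\alpha^2,u,v)$ with $\alpha\in\fqpr$ and $(u,v)\in A$. The first observation is that three points $(\alpha_i,\alpha_i^2,u_i,v_i)$, $i=1,2,3$, of $C_A$ are collinear only if the projections $(u_i,v_i)$ are collinear in $AG(2,q)$ (or coincide), which forces at most two distinct values among them since $A$ is an arc; a short direct computation with the quadratic coordinate then rules out the remaining degenerate configuration, so $C_A$ is a cap.

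Next I would verify completeness. Fix $Q=(\mathbf{z},\mathbf{w},s,t)\notin C_A$. I want two points $R_1=(\alpha_1,\alpha_1^2,u_1,v_1)$ and $R_2=(\alpha_2,\alpha_2^2,u_2,v_2)$ of $C_A$ with $Q$ on the line $R_1R_2$, i.e. $Q=R_1+\lambda(R_2-R_1)$ for some $\lambda\in\fq$. Looking at the first two coordinate blocks, collinearity of $(\alpha_1,\alpha_1^2)$, $(\alpha_2,\alpha_2^2)$, $(\mathbf{z},\mathbf{w})$ in the affine space over $\fq$ pins down, via the identity $\mathbf{w}-\alpha_1^2=(\mathbf{z}-\alpha_1)(\alpha_1+\alpha_2)$, a relation tying $\alpha_1+\alpha_2$ and $\alpha_1\alpha_2$ to $\mathbf{z}$ and $\mathbf{w}$; generically this leaves a one-parameter family of admissible pairs $\{\alpha_1,\alpha_2\}$ with the corresponding $\lambda$ (the Segre parameter $(x-x_1)(x-x_2)$ on that line) taking both square and non-square values as the pair varies. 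The last two coordinates $(s,t)$ then require that the point $(s,t)$ of $AG(2,q)$ lie on the secant of $A$ joining $(u_1,v_1)$ and $(u_2,v_2)$ with the matching value of $\lambda$ — and this is exactly where the bicovering hypothesis enters: whatever the square/non-square status of the $\lambda$ forced by the first block, the point $(s,t)$ is covered by a secant of $A$ realizing that status (external versus internal), so a consistent choice of $R_1,R_2$ exists. The case $(s,t)\in A$, where $(s,t)$ is not off $A$, is handled separately and easily since then one may take one of the $R_i$ to have projection $(s,t)$.

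The main obstacle I expect is bookkeeping the Segre sign condition consistently across the two blocks: one must check that the scalar $\lambda$ governing the position of $Q$ relative to $R_1R_2$ is literally the same $\lambda$ that appears as the frame coordinate on the line through $(u_1,v_1)$ and $(u_2,v_2)$ containing $(s,t)$, so that "$(s,t)$ internal/external to that segment" translates precisely into the required square/non-square condition coming from the $(\mathbf{z},\mathbf{w})$-block; and one must rule out the sporadic degenerate lines (those parallel to coordinate subspaces, or with $\alpha_1+\alpha_2$ or $\alpha_1\alpha_2$ landing outside $\fqpr$). These are genuinely the technical heart but are essentially the content already established in \cite{Gjac}; I would cite that source for the detailed verification and merely indicate the structure above, since Proposition \ref{mainP} is quoted here as a known tool rather than reproved.
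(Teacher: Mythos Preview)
The paper does not give its own proof of Proposition~\ref{mainP}; it simply quotes the result from \cite{Gjac} as a known tool. Your proposal correctly identifies this and ultimately defers to \cite{Gjac}, which is exactly what the paper does---your additional sketch of the argument (product of the parabola cap $\{(\alpha,\alpha^2)\}$ in $\fqpr\times\fqpr$ with the arc $A$, matching the Segre parameter across the two blocks) is along the right lines and goes beyond what the paper itself provides.
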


\subsection{Curves and function fields}


Let $\cC$ be a projective absolutely irreducible algebraic curve, defined over the algebraic closure $\K$ of $\fq$. 
An \textit{algebraic function field $F$ over $\K$} is an extension $F$ of $\K$ such that $F$ is a finite algebraic extension of $\K(x)$, for some element $x\in F$ transcendental over $\K$. If $F=\K(x)$, then $F$ is called the \textit{rational function field over $\K$}.
%
For basic definitions on function fields we refer to \cite{STI}.

It is well known that to any  curve $\cC$ defined over $\K$ one can associate a function field $\K(\cC)$ over $\K$, namely the field of the rational functions of $\cC$.
Conversely, to a  function field $F$ over $\K$ one can associate a curve $\cC$, defined over $\K$, such that $\K(\cC)$ is $\K$-isomorphic to $F$. The genus of $F$ as a function field coincides with the genus of $\cC$.

A place $\gamma$ of $\K(\cC)$ can be associated to a single point of $\cC$ called the \textit{center} of $\gamma$, but not vice versa. A  bijection between places of $\K(\cC)$ and points of $\cC$ holds provided that the curve $\cC$ is non-singular. 

Let $F$ be a function field over $\K$. If $F'$ is a finite extension of  $F$, then a place $\gamma '$ of $F'$ is said to be \textit{lying over} a place $\gamma$ of $F$
, 
if $\gamma\subset \gamma '$. This holds precisely when $\gamma = \gamma ' \cap F$. In this paper $e\left(\gamma ' | \gamma \right)$ will denote the \textit{ramification index} of $\gamma '$ over $\gamma$.

A finite extension $F'$ of a function field $F$ is said to be {\em unramified} if $e(\gamma'|\gamma)=1$ for every  $\gamma'$
place of $F'$ and every $\gamma$ place of $F$ with  $\gamma'$ lying over $\gamma$.


Throughout the paper, we will refer to the following result a number of times.

\begin{proposition}[Proposition 3.7.3 in \cite{STI}]\label{teo1}
Let $F$ be an algebraic function field over $\K$, and let $m>1$ be an integer relatively prime to the characteristic of $\K$. Suppose that $u\in F$ is an element satisfying
$
u \neq \omega^e\mbox{ for all }\omega \in F \mbox{ and } e|m\mbox{, }e>1.
$
Let
\begin{equation}
 F'=F(y)\mbox{ with }y^m=u.
\end{equation}
Then
\begin{itemize}
\item[(i)] for $\gamma'$ a place of $F'$ lying over a place $\gamma$ of $F$, we have
$
e(\gamma'| \gamma)=\frac{m}{r_\gamma}
$
where
\begin{equation}\label{eq55}
 r_\gamma:=(m,v_{\gamma}(u))>0
\end{equation}
is the greatest common divisor of $m$ and $v_\gamma(u)$;
\item[(ii)] if  $g$ (resp. $g'$) denotes the genus of $F$ (resp. $F'$) as a function field over $\K$, then
$$
g'=1+m\left( g-1+\frac{1}{2}\displaystyle\sum_{\gamma} \left(1-\frac{r_\gamma}{m}\right) \right),
$$
where $\gamma$ ranges over the places of $F$ and $r_\gamma$ is defined by \eqref{eq55}. 
\end{itemize}
\end{proposition}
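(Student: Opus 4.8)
The plan is to read Proposition~\ref{teo1} as the standard structure theory of a Kummer extension and to establish it in three stages: first show that $F'/F$ is a cyclic Galois extension of degree exactly $m$; then compute the ramification index at each place by a purely local argument; and finally feed the local data into the Riemann--Hurwitz formula to obtain the genus formula (ii). I would begin by verifying $[F':F]=m$. Since $\K$ is algebraically closed and $\gcd(m,\mathrm{char}\,\K)=1$, the field $\K$, hence $F$, contains a primitive $m$-th root of unity. Under this hypothesis $Y^m-u$ is irreducible over $F$ precisely when $u$ is not a $p$-th power in $F$ for any prime $p\mid m$, and the assumption that $u\neq\omega^e$ for every $\omega\in F$ and every divisor $e>1$ of $m$ is exactly this condition. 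Irreducibility gives $[F':F]=m$, and since adjoining one root $y$ of $Y^m-u$ adjoins all of them (they differ by roots of unity already lying in $F$), the extension $F'/F$ is Galois; the automorphism $y\mapsto\zeta y$ for a primitive root $\zeta$ generates a cyclic group of order $m$.

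The heart of the proof is the local computation of $e(\gamma'|\gamma)$. Fix a place $\gamma$ of $F$, set $n=v_\gamma(u)$ and $r_\gamma=\gcd(m,n)$, and let $\gamma'$ lie over $\gamma$ with index $e$. From $y^m=u$ one gets the valuation identity $m\,v_{\gamma'}(y)=e\,n$; writing $m=r_\gamma m_0$ and $n=r_\gamma n_0$ with $\gcd(m_0,n_0)=1$ forces $m_0\mid e$, that is $e\ge m/r_\gamma$. For the reverse inequality I would pass to the completion $\widehat F_\gamma$. Because $\K$ is algebraically closed it is the full constant field, so every residue field equals $\K$ and $\widehat F_\gamma\cong\K((\pi))$ for a uniformizer $\pi$; moreover Hensel's lemma, applicable since $\gcd(m,\mathrm{char}\,\K)=1$ and $\K$ contains the required roots, lets me absorb the unit part of $u$ into $y$ and reduce to $y^m=\pi^n$. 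The relation $(y^{m_0})^{r_\gamma}=(\pi^{n_0})^{r_\gamma}$ shows $y^{m_0}/\pi^{n_0}$ is an $r_\gamma$-th root of unity, whose $r_\gamma$ possible values correspond to the $r_\gamma$ distinct places over $\gamma$; after rescaling $y$ each local extension is $\K((\pi))\subseteq\K((\pi))[y]/(y^{m_0}-\pi^{n_0})$, which, because $\gcd(m_0,n_0)=1$, is totally ramified of degree $m_0=m/r_\gamma$. Hence $e(\gamma'|\gamma)=m/r_\gamma$, there are exactly $r_\gamma$ places over $\gamma$, and the residue degree is $1$, consistent with the fundamental identity.

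Finally I would assemble (ii). Since $\gcd(e,\mathrm{char}\,\K)=\gcd(m/r_\gamma,\mathrm{char}\,\K)=1$, every ramification is tame, so by Dedekind's different theorem the different exponent is $d(\gamma'|\gamma)=e-1=m/r_\gamma-1$. As all places have degree $1$, the contribution of the $r_\gamma$ places over $\gamma$ to $\deg\mathfrak{Diff}(F'/F)$ is $r_\gamma\,(m/r_\gamma-1)=m-r_\gamma$, and the sum over $\gamma$ is finite because $r_\gamma=m$ (unramified) for all $\gamma$ outside the finitely many zeros and poles of $u$. Substituting into Riemann--Hurwitz,
$$2g'-2=m(2g-2)+\sum_\gamma(m-r_\gamma)=m(2g-2)+m\sum_\gamma\Bigl(1-\frac{r_\gamma}{m}\Bigr),$$
and dividing by $2$ and rearranging yields exactly formula (ii).

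I expect the main obstacle to be the exact determination $e=m/r_\gamma$, and in particular the reverse inequality $e\le m/r_\gamma$ together with the count of $r_\gamma$ places over $\gamma$: the elementary valuation identity only delivers divisibility, and it is the completion-and-Hensel reduction to the model $y^{m_0}=\pi^{n_0}$ that converts the problem into a transparent totally ramified tame extension. Once this local picture is secured, the tameness of the different, the finiteness of the ramification sum, and the passage through Riemann--Hurwitz are all routine.
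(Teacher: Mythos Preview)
Your proof is correct and follows the standard route: irreducibility of $Y^m-u$ via the criterion on $e$-th powers, the local valuation identity plus Hensel/completion to pin down $e(\gamma'|\gamma)=m/r_\gamma$ exactly, tameness of the different, and Riemann--Hurwitz. There is nothing to compare against in the paper itself: Proposition~\ref{teo1} is quoted verbatim from Stichtenoth \cite[Proposition~3.7.3]{STI} and is used as a black box, with no proof supplied here. Your argument is essentially the one given in that reference.
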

An extension such as $F'$ in Proposition \ref{teo1} is said to be a {\em Kummer extension} of $F$.


 A curve $\cC$ is said to be defined over $\fq$ if the ideal of $\cC$ is generated by polynomials with coefficients in $\fq$. 
In this case, $\fq(\cC)$ denotes the subfield of $\K(\cC)$ consisting of the rational functions defined over $\fq$.
A place of $\K(\cC)$ is said to be $\fq$-rational if it is fixed by the Frobenius map on $\K(\cC)$. The center of an $\fq$-rational place is an  $\fq$-rational point of $\cC$; conversely, if $P$ is a simple $\fq$-rational point of $\cC$, then the only place centered at $P$ is $\fq$-rational. 
The following result is a corollary to Proposition \ref{teo1}.
\begin{proposition}\label{teo1cor}
Let $\cC$ be an irreducible plane curve of genus $g$ defined over $\fq$. Let $u\in \fq(\cC)$ be a non-square in $\K(\cC)$. 
Then the Kummer extension $\K(\cC)(w)$, with $w^2=u$, is the function field of some irreducible curve defined over $\fq$ of genus
$$
g'=2g-1+\frac{M}{2},
$$
where $M$ is the number of places of $\K(\cC)$ with odd valuation of $u$.
\end{proposition}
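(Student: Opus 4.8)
The plan is to obtain Proposition \ref{teo1cor} as the special case $m=2$ of Proposition \ref{teo1}, applied to the function field $F=\K(\cC)$. Since $q$ is odd, $m=2$ is prime to the characteristic of $\K$, so the only thing to verify before invoking Proposition \ref{teo1} is its hypothesis on $u$: the only divisor $e$ of $2$ with $e>1$ is $e=2$, hence the requirement that $u\neq\omega^e$ for all $\omega\in F$ and all such $e$ amounts to $u$ not being a square in $\K(\cC)$, which is exactly the assumption. Consequently $\K(\cC)(w)$ with $w^2=u$ is a Kummer (in particular, degree-$2$) extension of $\K(\cC)$, and so it is the function field of an irreducible curve defined over $\K$, of some genus $g'$.

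To compute $g'$ I would use part (ii) of Proposition \ref{teo1}. For a place $\gamma$ of $\K(\cC)$ one has $r_\gamma=(2,v_\gamma(u))\in\{1,2\}$, with $r_\gamma=1$ precisely when $v_\gamma(u)$ is odd and $r_\gamma=2$ otherwise. The function $u$ is non-constant (a constant would be a square in $\K(\cC)$, since $\K$ is algebraically closed), so it has finitely many zeros and poles, and $v_\gamma(u)=0$ — hence $r_\gamma=2$ — for every remaining place; thus only the $M$ places at which $u$ has odd valuation contribute to the sum in Proposition \ref{teo1}(ii), each contributing $1-\tfrac12=\tfrac12$. Therefore $\sum_\gamma\bigl(1-\tfrac{r_\gamma}{2}\bigr)=\tfrac{M}{2}$, and substituting into the genus formula yields
$$
g'=1+2\Bigl(g-1+\tfrac12\cdot\tfrac{M}{2}\Bigr)=2g-1+\frac{M}{2}.
$$

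It remains to see that the curve may be taken over $\fq$. Here I would use that $u\in\fq(\cC)$: the polynomial $W^2-u$ has coefficients in $\fq(\cC)$ and is irreducible over $\fq(\cC)$ (it is already irreducible over the larger field $\K(\cC)$), so $\fq(\cC)(w)$ is a degree-$2$ extension of $\fq(\cC)$ and hence the function field of an irreducible curve $\cC'$ defined over $\fq$ — concretely, the curve cut out by the equations of $\cC$ together with a denominator-cleared form of $w^2=u$, all of which have coefficients in $\fq$. Extending the constant field from $\fq$ to $\K$ gives $\K\cdot\fq(\cC)(w)=\K(\cC)(w)$, the function field computed above; since $\fq$ is perfect this constant-field extension leaves the genus unchanged, so $\cC'$ has genus $g'=2g-1+M/2$.

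There is no essential obstacle here — the statement is a corollary — but the two points worth spelling out are: that the a priori infinite sum over all places of $\K(\cC)$ in Proposition \ref{teo1}(ii) collapses to the $M$ places with odd valuation of $u$ (so that the genus increment is exactly $M/2$), and that the genus obtained over $\K$ is legitimately the genus of the curve $\cC'$ defined over $\fq$, which is where one appeals to the constant-field extension $\fq\subseteq\K$ (equivalently, to $\fq$ being perfect).
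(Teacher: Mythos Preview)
Your proposal is correct and is exactly the intended derivation: the paper presents this proposition as an immediate corollary to Proposition~\ref{teo1} and gives no separate proof, and your specialization to $m=2$ together with the observation that only the $M$ places with odd $v_\gamma(u)$ contribute to the genus formula is precisely that corollary. The extra care you take about the sum being finite and about descent to $\fq$ via the constant-field extension is more detail than the paper provides, but entirely in line with its approach.
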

The function field $\K(\cC)(w)$ as in Proposition \ref{teo1cor} is said to be a {\em double cover} of $\K(\cC)$ (and similarly the corresponding irreducible curve defined over $\fq$ is called a double cover of $\cC$).

Finally, we recall the Hasse-Weil bound, which will play a crucial role in our proofs.
\begin{proposition}[Hasse-Weil Bound - Theorem 5.2.3 in \cite{STI}]\label{HaWe}
The number $N_q$ of $\fq$-rational places of the function field $\K(\cC)$ of a curve $\cC$ defined over $\fq$ with genus $g$ satisfies
$$
|N_q-(q+1)|\le 2g \sqrt q.
$$
\end{proposition}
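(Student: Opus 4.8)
The plan is to deduce the bound from the rationality and the Riemann Hypothesis of the zeta function of $\cC$, the whole substance of the statement residing in the latter. Write $F=\K(\cC)$ for the function field, and for each $n\ge 1$ let $N_{q^n}$ denote the number of $\mathbb{F}_{q^n}$-rational places of $\cC$, equivalently of the constant field extension $F\cdot\mathbb{F}_{q^n}$. I would package these counts into the zeta function
\begin{equation*}
Z(t)=\exp\left(\sum_{n\ge 1}N_{q^n}\frac{t^n}{n}\right),
\end{equation*}
the point being that the location of the zeros of $Z$ controls $N_q=N_{q^1}$ directly.

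First I would establish, via the Riemann--Roch theorem, that $Z(t)$ is a rational function of the shape
\begin{equation*}
Z(t)=\frac{L(t)}{(1-t)(1-qt)},\qquad L(t)=\prod_{i=1}^{2g}(1-\alpha_i t)\in\mathbb{Z}[t],
\end{equation*}
with $L(0)=1$ and $\deg L=2g$, where the $\alpha_i$ are the reciprocals of the roots of $L$. Comparing the coefficients of $\log Z$ then yields the explicit formula
\begin{equation*}
N_{q^n}=q^n+1-\sum_{i=1}^{2g}\alpha_i^n\qquad(n\ge 1).
\end{equation*}
Riemann--Roch, in the guise of duality for the canonical class, also furnishes the functional equation $Z(t)=q^{g-1}t^{2g-2}Z\!\left(\tfrac{1}{qt}\right)$, which translates into the statement that the multiset $\{\alpha_1,\dots,\alpha_{2g}\}$ is stable under $\alpha\mapsto q/\alpha$; since $L$ has integer coefficients it is moreover stable under complex conjugation.

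The heart of the matter, and the step I expect to be the main obstacle, is the Riemann Hypothesis for $\cC$, namely $|\alpha_i|=\sqrt q$ for every $i$. I would obtain it by the elementary method of Stepanov and Bombieri rather than by Weil's original argument via positivity of correspondences on $\cC\times\cC$. The key input is a one-sided estimate: using Riemann--Roch one constructs, over a suitable base field, a non-zero function on $\cC$ vanishing to a prescribed high order at every rational place, and then comparing the total order of vanishing with the degree of its pole divisor bounds the number of rational places, giving $N_{q^n}\le q^n+1+(2g+1)q^{n/2}$ whenever $q^n$ is a perfect square large relative to $g$. Feeding this into the explicit formula yields $\sum_i\alpha_i^{n}\ge-(2g+1)q^{n/2}$ along the even values of $n$. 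A Pringsheim-type argument applied to the power series $\sum_n\bigl(\sum_i\alpha_i^{n}+(2g+1)q^{n/2}\bigr)t^n$, whose coefficients are non-negative along that subsequence, locates its radius of convergence and forces $\max_i|\alpha_i|\le\sqrt q$; the pairing $\alpha\mapsto q/\alpha$ from the functional equation then gives $\min_i|\alpha_i|=q/\max_i|\alpha_i|\ge\sqrt q$, so that $|\alpha_i|=\sqrt q$ for all $i$. The delicate point throughout is the dimension count guaranteeing the auxiliary function with simultaneously small pole degree and large vanishing order, which is exactly where the genus enters and where the positive-characteristic subtleties (Frobenius and $p$-th powers) must be handled with care.

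Finally, specialising the explicit formula to $n=1$ gives $N_q-(q+1)=-\sum_{i=1}^{2g}\alpha_i$, whence by the triangle inequality together with the Riemann Hypothesis
\begin{equation*}
|N_q-(q+1)|\le\sum_{i=1}^{2g}|\alpha_i|=2g\sqrt q,
\end{equation*}
which is the asserted bound.
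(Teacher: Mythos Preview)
The paper does not prove this proposition at all: it is quoted as a background result with a reference to Stichtenoth's textbook (Theorem~5.2.3 in \cite{STI}), and is then used as a black box in the proofs of Propositions~\ref{contiamo} and~\ref{10feb}. So there is no ``paper's own proof'' to compare against.

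That said, your sketch is the standard route and is essentially the one carried out in the cited reference: rationality of the zeta function via Riemann--Roch, the functional equation from duality, and the Riemann Hypothesis established by the Stepanov--Bombieri method, followed by the explicit formula and the triangle inequality. The outline is correct in spirit; the only places where a full write-up would require care are the auxiliary-function dimension count (which you flag) and the passage from the one-sided inequality $N_{q^n}\le q^n+1+O(q^{n/2})$ to $\max_i|\alpha_i|\le\sqrt q$, where a clean argument uses non-negativity of power sums of the $|\alpha_i|^2$ rather than a bare Pringsheim statement. None of this is a gap so much as a detail to be filled in, and for the purposes of the present paper the citation suffices.
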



\subsection{Order and class of a place with respect to a plane model}\label{order}

 Let $\mathcal{C}$ be the algebraic plane curve defined by the equation
$f(X,Y)=0$, where $f(X,Y)$ is an irreducible polynomial over $\K$, and let
$\K(\mathcal{C})$ be the function field
of $\mathcal{C}$. Let $\bar x$ and $\bar y$ denote the rational functions associated to the affine coordinates $X$ and $Y$, respectively. Then $\K(\cC)=\K(\bar{x},\bar{y})$ with $f(\bar x,\bar y)=0$. Let $\mathbb P_{\mathcal C}$ denote the set of all places of $\K(\mathcal C)$, and let
$ \mathrm{Div}(\K(\mathcal{C}))$ be the group of divisors of $\K(\mathcal C)$, that is the free abelian group generated by $\mathbb P_{\mathcal C}$.

Let $\mathcal{D}$
be the following subset of $ \mathrm{Div}(\K(\mathcal{C}))$:
$$
\mathcal D:=\{\mathrm{div}(a\bar{x}+b\bar{y}+c)+E \; \mid \; a,b,c \in \K , \; (a,b,c)\neq (0,0,0)\},
$$
where
$$
E=\sum_{\gamma\in \mathbb{P}_{\mathcal{C}}}e_{\gamma}\gamma ,
     \; \text{with  } e_{\gamma}=-\mathrm{min}\{v_{\gamma}(\bar{x}), v_{\gamma}(\bar{y}),
     v_{\gamma}(1)\} \text{ .}
$$
This set $\mathcal D$ is a linear series, which is usually called the linear series cut out by the lines of $\mathbb P^2(\K)$.
For basic definitions on linear series we refer to \cite{HKT}.
There is a one-to-one correspondence between $\mathcal{D}$
and the set of all lines in $\mathbb P^2(\K)$:  a line $\ell$ with homogeneous equation $aX_0+bX_1+cX_2=0$
corresponds to the divisor
$D(\ell):=\mathrm{div}(a\bar{x}+b\bar{y}+c)+E$. 

For a
place $\gamma$ with $(\mathcal{D},\gamma)$ order sequence
$(0,j_{1}(\gamma),j_{2}(\gamma))$,  and for every line $\ell$, we have $$v_{\gamma}(D(\ell)) \in
\{0,j_{1}(\gamma),j_{2}(\gamma)\}.$$
A line $\ell$ passes through the center of $\gamma$ if and only if $v_\gamma(D(\ell))>0$; also,  there exists a unique line $\ell$ with $v_{\gamma}(D(\ell))=j_{2}(\gamma)$, which is called the {\em tangent line of the place $\gamma$}. The tangent line of a place $\gamma$ is one of the tangent lines of $\cC$ at the center of $\gamma$. The integers $j_1(\gamma)$ and $j_2(\gamma)-j_1(\gamma)$ are called the {\em order} and the {\em class} of $\gamma$, respectively. A place with order equal to $1$ is called a {\em linear} place of $\mathcal C$.

%
%

\begin{proposition}\label{val}
Let $Q$ be a point of $\mathcal{C}$ and $\ell$ be a line in
$\mathbb{P}^{2}(\K)$. Then the sum
\begin{equation*}
    \sum_{\begin{array}{c}
            \gamma \text{ centered at } Q
          \end{array}
    } {v_{\gamma}(D(\ell))} 
\end{equation*}
is equal to the intersection multiplicity $\mathrm{I}(Q,\mathcal{C}\cap \ell)$ of $\cC$ and $\ell$ at $Q$. 
\end{proposition}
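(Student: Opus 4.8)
The plan is to reduce the statement to a length computation in the local ring of $\mathcal{C}$ at $Q$ and then to pass to its normalization. Since $\mathcal{D}$ is the linear series cut out by the lines of $\mathbb{P}^2(\K)$, the divisor $D(\ell)$ is intrinsic to the pair $(\mathcal{C},\ell)$ — it is the pull-back of $\ell$ under the morphism defined by the plane model — so it may be computed in any affine frame. I would therefore choose projective coordinates in which $Q$ lies in the affine chart and $\ell$ is not the line at infinity; then for every place $\gamma$ centered at $Q$ one has $v_\gamma(\bar x)\ge 0$ and $v_\gamma(\bar y)\ge 0$, whence $e_\gamma=-\mathrm{min}\{v_\gamma(\bar x),v_\gamma(\bar y),0\}=0$, and consequently $v_\gamma(D(\ell))=v_\gamma(a\bar x+b\bar y+c)=v_\gamma(L)$, where $L:=a\bar x+b\bar y+c$ is the restriction to $\mathcal{C}$ of the affine equation of $\ell$. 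Thus the assertion reduces to
$$
\sum_{\gamma \text{ centered at } Q} v_\gamma(L)\;=\;\mathrm{I}(Q,\mathcal{C}\cap\ell).
$$

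Next I would use that $\mathrm{I}(Q,\mathcal{C}\cap\ell)=\dim_{\K}\bigl(\mathcal{O}_Q/L\mathcal{O}_Q\bigr)$, where $\mathcal{O}_Q$ denotes the local ring of $\mathcal{C}$ at $Q$; here one needs $\mathcal{C}$ irreducible of degree at least $2$ (the case of interest, cubics), so that $\ell$ is not a component of $\mathcal{C}$ and $L$ is a non-zero non-unit — hence a non-zerodivisor — in $\mathcal{O}_Q$. Let $\widetilde{\mathcal{O}_Q}$ be the integral closure of $\mathcal{O}_Q$ in $\K(\mathcal{C})$: it is a module-finite $\mathcal{O}_Q$-algebra and a semilocal principal ideal domain whose maximal ideals $\mathfrak{m}_\gamma$ correspond bijectively to the places $\gamma$ centered at $Q$, the localization at $\mathfrak{m}_\gamma$ being the valuation ring $\mathcal{O}_\gamma$ of $\gamma$. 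By the Chinese Remainder Theorem $\widetilde{\mathcal{O}_Q}/L\widetilde{\mathcal{O}_Q}\cong\prod_{\gamma}\mathcal{O}_\gamma/\mathfrak{m}_\gamma^{v_\gamma(L)}$, and since $\K$ is algebraically closed every residue field equals $\K$, so $\dim_{\K}\bigl(\widetilde{\mathcal{O}_Q}/L\widetilde{\mathcal{O}_Q}\bigr)=\sum_{\gamma}v_\gamma(L)$.

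It then remains to prove $\dim_{\K}\bigl(\mathcal{O}_Q/L\mathcal{O}_Q\bigr)=\dim_{\K}\bigl(\widetilde{\mathcal{O}_Q}/L\widetilde{\mathcal{O}_Q}\bigr)$. For this I would apply the Snake Lemma to multiplication by $L$ on the short exact sequence $0\to\mathcal{O}_Q\to\widetilde{\mathcal{O}_Q}\to M\to 0$, where $M:=\widetilde{\mathcal{O}_Q}/\mathcal{O}_Q$ is a finite-dimensional $\K$-vector space — this finiteness is exactly module-finiteness of the normalization. Writing $\lambda\colon M\to M$ for multiplication by $L$, and noting that multiplication by $L$ is injective on the domains $\mathcal{O}_Q$ and $\widetilde{\mathcal{O}_Q}$ (as $L\neq 0$), the Snake Lemma produces an exact sequence $0\to\ker\lambda\to\mathcal{O}_Q/L\mathcal{O}_Q\to\widetilde{\mathcal{O}_Q}/L\widetilde{\mathcal{O}_Q}\to M/LM\to 0$. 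Since $M$ is finite-dimensional, $\dim_{\K}\ker\lambda=\dim_{\K}(M/LM)$ by rank–nullity, so the vanishing of the alternating sum of $\K$-dimensions along this sequence yields the desired equality, and chaining the three identities proves the Proposition. The main obstacle is precisely this last identification together with the structural facts about the normalization — its finiteness over $\mathcal{O}_Q$ and the bijection between its maximal ideals and the places centered at $Q$ — which are what allow the single local intersection number on the right to be recovered as the sum over the (possibly several) branches of $\mathcal{C}$ at $Q$ on the left. A second, equally workable route bypasses commutative algebra by using branches directly: each place $\gamma$ centered at $Q$ gives a primitive local parametrization $t\mapsto(\bar x(t),\bar y(t))$ of $\mathcal{C}$ with $v_\gamma(L)=\mathrm{ord}_t\!\bigl(a\bar x(t)+b\bar y(t)+c\bigr)$, after which one invokes the branch description of intersection multiplicity, for which we refer to \cite{HKT}.
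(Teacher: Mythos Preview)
The paper does not actually prove this proposition: it is stated without proof in the preliminaries (Section~\ref{order}), as a standard background fact about the linear series cut out by lines, with \cite{HKT} given as the general reference for linear series. So there is no ``paper's own proof'' to compare against.

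Your argument is correct. The reduction to an affine chart in which $Q$ is affine and $\ell$ is not the line at infinity is legitimate because $D(\ell)$ is the hyperplane-section divisor attached to $\ell$ and hence intrinsic to the pair $(\mathcal{C},\ell)$; in such a chart $e_\gamma=0$ at every place centered at $Q$, and the claim becomes $\sum_\gamma v_\gamma(L)=\dim_\K(\mathcal{O}_Q/L\mathcal{O}_Q)$. Your passage to the normalization is the classical length argument: module-finiteness of $\widetilde{\mathcal{O}_Q}$ over $\mathcal{O}_Q$, the bijection between maximal ideals of $\widetilde{\mathcal{O}_Q}$ and places centered at $Q$, and the Chinese Remainder Theorem give $\dim_\K(\widetilde{\mathcal{O}_Q}/L\widetilde{\mathcal{O}_Q})=\sum_\gamma v_\gamma(L)$, while the Snake Lemma applied to multiplication by the non-zerodivisor $L$ on $0\to\mathcal{O}_Q\to\widetilde{\mathcal{O}_Q}\to M\to 0$ (with $M$ finite-dimensional) yields the equality of the two lengths. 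One small remark: your parenthetical ``the case of interest, cubics'' undersells the scope---the proposition and your proof apply to any irreducible plane curve of degree at least $2$, which is exactly how the paper uses it for the quartics $\cQ_P$ and the degree-$4m$ curves $\cC_P$ in Section~\ref{sec22}.

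The branch-parametrization route you sketch at the end is the approach most likely intended by the paper's reference to \cite{HKT}: there the intersection multiplicity of a curve and a line at a point is \emph{defined} as the sum over branches of the $t$-orders of the restricted linear form, which makes the proposition essentially tautological once one checks that this agrees with the usual local definition. Your commutative-algebra proof is the content behind that agreement.
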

If $\ell$ is a line through $Q$ which is not a tangent of $\cC$ at $Q$, then $v_\gamma(D(\ell))=j_1(\gamma)$ for each place $\gamma$ centered at $Q$.
Therefore, if $Q$ is an $m$-fold point of $\cC$, then the sum of the orders of the places centered at $Q$ coincides with $m$. 
Also, the number of places centered at $Q$ is greater than or equal to the number of distinct tangents at $Q$.


\section{A family of curves defined over $\fq$}\label{sec22}

Throughtout this section $q=p^h$ for some prime $p>3$, and $m$ is a proper divisor of $q-1$ with $(m,6)=1$. Also, $t$ is a non-zero element in $\fq$ which is not an $m$-th power in $\fq$. For $a,b\in \fq$ with $ab\neq (a-1)^3$, let $P=(a,b)\in AG(2,q)$. A crucial role for the investigation of the bicovering properties of a coset of index $m$ in the abelian group of the non-singular $\fq$-rational points of a nodal cubic is played by the curve
\begin{equation}\label{curva2bis}
\cC_P: f_{a,b,t,m}(X,Y)=0, 
\end{equation}
where
\begin{equation}\label{curva2}
\begin{array}{rcl}
f_{a,b,t,m}(X,Y)& = &a(t^3X^{2m}Y^m+t^3X^mY^{2m}-3t^2X^{m}Y^m+1) 
\\ & & -bt^2X^mY^m-t^4X^{2m}Y^{2m}+3t^2X^mY^m-tX^m-tY^m.
\end{array}
\end{equation}


In \cite{MR1221589,TamasRoma} it is claimed without proof that $\cC_P$ is absolutely irreducible of genus less than or equal to some absolute constant times $m^2$. The proof does not seem to be straightforward. In particular, Segre's  criterion  (\cite{MR0149361}; see also \cite[Lemma 8]{MR0295201}) cannot be applied. 
Actually, for $a^3=-1$ and $b=1-(a-1)^3$, the polynomial $f_{a,b,t,m}(X,Y)$ is reducible; in fact, 
$$
f_{a,b,t,m}(X,Y)=-(a^2+t^2X^mY^m-atY^m)(a^2+t^2X^mY^m-atX^m).
$$
The first result of this section is the existence of an absolutely irreducible component of $\cC_P$ defined over $\fq$. We distinguish a number of cases.

\subsection{$a^3=-1$ and $b=1-(a-1)^3$}\label{caso1}

If both $a^3= -1$ and $b= 1-(a-1)^3$ hold, then  the component of $\cC_P$  with equation $a^2+t^2X^mY^m-atX^m=0$ is a generalized Fermat curve over $\fq$ (see  \cite{FGFermat}). As proven in \cite{FGFermat}, such component is absolutely irreducible with genus less than $m^2$.

\begin{proposition}\label{P38} Assume that $a^3=-1$ and $b=1-(a-1)^3$.
Then the curve $\cC_P$ has an irreducible component defined over $\fq$ of genus less than $m^2$, with equation $a^2+t^2X^mY^m-atX^m=0$.
\end{proposition}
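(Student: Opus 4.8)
The plan is to analyze directly the component $\mathcal{Y}\colon a^2+t^2X^mY^m-atX^m=0$ of $\mathcal{C}_P$ under the hypothesis $a^3=-1$, $b=1-(a-1)^3$, showing that it is absolutely irreducible over $\fq$ and computing its genus via Proposition \ref{teo1}. First I would observe that, since $a^3=-1$, we have $a\neq 0$, so the equation can be rewritten as
\[
Y^m=\frac{atX^m-a^2}{t^2X^m}=\frac{a}{t}\cdot\frac{tX^m-a}{tX^m},
\]
which exhibits $\K(\mathcal{Y})$ as the Kummer extension $\K(x)(y)$ with $y^m=u(x)$, where $u=\frac{a}{t}\cdot\frac{tX^m-a}{tX^m}\in\fq(x)$. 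All coefficients lie in $\fq$, so both the curve and this presentation are defined over $\fq$.

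Next I would verify the hypothesis of Proposition \ref{teo1}, namely that $u$ is not of the form $\omega^e$ in $\K(x)$ for any divisor $e>1$ of $m$. For this I would factor $\mathrm{div}(u)$ in the rational function field $\K(x)$: the numerator $tX^m-a$ has $m$ distinct simple roots (here $p\nmid m$ is used, and $a\neq 0$), contributing $m$ places of valuation $1$; the denominator $tX^m$ contributes the place $x=0$ with valuation $-m$; and the place at infinity gets valuation $m-m=0$. Hence $\mathrm{div}(u)=\sum_{i=1}^m P_i - m\cdot P_0$, and since the $P_i$ occur with multiplicity $1$, $u$ is not an $e$-th power for any $e>1$; in particular it is not even a square. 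By Proposition \ref{teo1}(i) the extension is then of degree exactly $m$ over $\K(x)$, so $\mathcal{Y}$ is absolutely irreducible, and being cut out by a polynomial over $\fq$ it is defined over $\fq$.

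Finally I would compute the genus from Proposition \ref{teo1}(ii). The ramified places of $\K(x)$ are exactly those where $r_\gamma=(m,v_\gamma(u))<m$: at each $P_i$ we get $r_\gamma=(m,1)=1$, and at $P_0$ we get $r_\gamma=(m,-m)=m$, i.e.\ $P_0$ is unramified; all other places are unramified. So
\[
g'=1+m\Bigl(0-1+\tfrac12\cdot m\cdot\bigl(1-\tfrac1m\bigr)\Bigr)=1+m\Bigl(-1+\tfrac{m-1}{2}\Bigr)=\frac{m^2-3m+2}{2}=\frac{(m-1)(m-2)}{2},
\]
which is strictly less than $m^2$ (indeed much less), giving the claimed bound. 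I expect no serious obstacle here: the only point requiring a little care is checking that $tX^m-a$ is separable and that none of its roots coincide with $x=0$ (both immediate from $p\nmid m$ and $a\neq 0$), and that the component written in the statement, $a^2+t^2X^mY^m-atX^m=0$, is indeed one of the two factors of $f_{a,b,t,m}$ displayed above — which is just a matter of matching the factorization already exhibited. The appeal to the generalized Fermat curve literature \cite{FGFermat} can replace the explicit genus computation, but the Kummer-extension argument above is self-contained given Proposition \ref{teo1}.
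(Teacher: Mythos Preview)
Your argument is correct. The divisor computation for $u=\frac{a}{t}\cdot\frac{tX^m-a}{tX^m}$ in $\K(x)$ is right, the non-$e$-th-power check goes through because of the simple zeros at the $m$ roots of $tX^m=a$, and the genus formula from Proposition~\ref{teo1} gives exactly $(m-1)(m-2)/2<m^2$; this value in fact reappears later in the paper (proof of Proposition~\ref{exiexi}, Case~1).

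The paper itself does not give an argument here: it simply observes that $a^2+t^2X^mY^m-atX^m=0$ is a generalized Fermat curve and cites \cite{FGFermat} for both absolute irreducibility and the genus bound. Your route is the self-contained version of that citation---a direct Kummer-extension computation over $\K(x)$ using Proposition~\ref{teo1}, which is precisely the machinery the paper already has in hand. What you gain is independence from the external reference and an exact genus; what the paper gains is brevity. Conceptually the two approaches coincide (the results in \cite{FGFermat} are proved by the same Kummer technique), so there is no genuine divergence in method, only in how much is spelled out.
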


\subsection{$a\neq 0$ and either $a^3\neq -1$ or $b\neq 1-(a-1)^3$}\label{alpha12}
\begin{lemma}\label{9apr} Assume that $ab\neq (a-1)^3$. Then
the plane quartic curve $\cQ_P:g_P(X,Y)=0$ with
$$
\begin{array}{rcl}
g_P(X,Y)& = & a(t^3X^{2}Y+t^3XY^{2}-3t^2XY+1)-bt^2XY 
\\ & & -t^4X^{2}Y^{2}+3t^2XY-tX-tY
\end{array}
$$
is absolutely irreducible.
\end{lemma}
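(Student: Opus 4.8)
The goal is to show that the plane quartic $\cQ_P\colon g_P(X,Y)=0$ is absolutely irreducible whenever $ab\neq(a-1)^3$ (and, implicitly in this subsection, $a\neq 0$ with $a^3\neq-1$ or $b\neq 1-(a-1)^3$). My first move is to rule out the degenerate factorizations by degree considerations. A quartic that is not absolutely irreducible over $\K$ factors as either (a) a product of two conics, (b) a product of a line and an (absolutely irreducible) cubic, (c) a product of a line, an irreducible conic and... no, $1+1+2=4$ so a line times a line times a conic, or (d) a product of four lines (with multiplicity allowed). The strategy is to examine the behaviour of $g_P$ at infinity and along the two coordinate axes, extract the structure of any line components, and derive a contradiction with $ab\neq(a-1)^3$ in each case.

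\textbf{Key steps.} First I would homogenize $g_P$ and compute its intersection with the line at infinity $Z=0$: the top-degree part is $-t^4X^2Y^2$, so the curve meets $Z=0$ only at the two points $[1:0:0]$ and $[0:1:0]$, each with multiplicity $2$. This already forbids case (d) (four distinct lines would generically meet $Z=0$ in up to four points, and four lines forming this pattern would have to be very special — two lines through $[1:0:0]$ and two through $[0:1:0]$, i.e. horizontal and vertical lines), and it severely constrains cases (b) and (c): any line component must be $X=c$ or $Y=c$ for some constant $c$, or pass through one of the two points at infinity. Next I would test whether $X=c$ divides $g_P$: substituting $X=c$ into $g_P(X,Y)$ gives a polynomial in $Y$ of degree $2$ (the $t^4X^2Y^2$ and $t^3X^2Y$, $t^3XY^2$, $bt^2XY$ terms survive), and requiring this to vanish identically in $Y$ forces a small system of equations on $a,b,c,t$; I expect this system to have no solution, or to force exactly the excluded locus $ab=(a-1)^3$ (or $a=0$, $a^3=-1$ with $b=1-(a-1)^3$), using $t$ not an $m$-th power only insofar as $t\neq0$. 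By the $X\leftrightarrow Y$ symmetry of $g_P$, the same holds for $Y=c$. This disposes of all cases involving a line component. Finally, for case (a), two conics $C_1C_2=g_P$: each $C_i$ must meet $Z=0$ in exactly the points $\{[1:0:0],[0:1:0]\}$ with total multiplicity $2$, so each $C_i$ passes through both points at infinity, forcing each $C_i$ to have leading form $\lambda_i XY$; then $C_i = \lambda_i XY + (\text{linear})$, and matching $C_1C_2$ against $g_P$ term by term yields a determined system. Here I would compare the coefficient of $XY$, the absence of $X^2$ and $Y^2$ (pure) terms, the coefficients of $X$, $Y$, $X^2Y$, $XY^2$, and the constant term $a$; solving should again be incompatible with $ab\neq(a-1)^3$, i.e. the only way the quartic splits into two conics is precisely on the excluded locus (where indeed $f_{a,b,t,m}$ itself factors, consistent with the remark preceding the lemma).

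\textbf{Main obstacle.} The genuinely delicate part is the two-conics case (a): unlike the line cases, which reduce to a one-variable substitution, factoring a quartic as a product of two conics is an eight-parameter matching problem (six coefficients per conic, minus a scaling), and showing the resulting polynomial system forces $ab=(a-1)^3$ requires care to avoid missing a spurious branch. I would organize this by first using the infinity analysis to normalize both conics to the form $\lambda XY+\mu X+\nu Y+\rho$ (note such a conic is automatically reducible — it is $(\lambda X + \nu)(Y + \mu/\lambda)/1$ up to constants only if $\lambda\rho=\mu\nu$, so I must keep both the reducible and irreducible sub-possibilities in mind, but either way the leading form is $XY$), then exploiting the $X\leftrightarrow Y$ symmetry of $g_P$ to argue the two conics are either individually symmetric or swapped by the involution, cutting the casework roughly in half. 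A secondary subtlety is keeping track of the standing hypotheses of the subsection ($a\neq0$ and $a^3\neq-1$ or $b\neq1-(a-1)^3$) versus the weaker hypothesis $ab\neq(a-1)^3$ actually quoted in the lemma; I would check at the end that the excluded factorizations occur only where at least one of these fails, so that the lemma as stated is correct. The characteristic hypothesis $p>3$ enters to guarantee that coefficients like $3t^2$ do not vanish, which is needed when comparing the $XY$-coefficient and the $X^2Y$, $XY^2$-coefficients of the product against $g_P$.
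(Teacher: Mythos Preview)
Your overall strategy---use the structure at infinity to rule out line components, then handle the conic$\times$conic case by coefficient matching---is the same as the paper's, and it is correct. The difference lies in how the two-conics case is organized. You normalize each conic to the form $\lambda XY+\mu X+\nu Y+\rho$ and plan to grind through the resulting system, invoking the $X\leftrightarrow Y$ symmetry to halve the work. The paper extracts one more piece of information from the infinity analysis: $X_\infty$ and $Y_\infty$ are \emph{ordinary} double points, with tangent lines $\{Y=0,\,Y=a/t\}$ at $X_\infty$ and $\{X=0,\,X=a/t\}$ at $Y_\infty$. If $\cQ_P=\cC_1\cC_2$, each $\cC_i$ passes simply through both points, so at each point it picks up exactly one of the two tangent lines. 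Hence each $\cC_i$ is already pinned down to the form $(X-\alpha)(Y-\beta)+\epsilon$ with $\alpha,\beta\in\{0,a/t\}$, leaving only two cases (after symmetry) and a single free parameter per conic. The coefficient comparison then collapses to a three-line computation: one case gives $\epsilon\bar\epsilon=0$ (impossible), the other forces $a^3=-1$ and $b=1-(a-1)^3$, contradicting the subsection's standing hypotheses. Your approach works, but the tangent-line bookkeeping is what turns the ``eight-parameter matching problem'' you flag as the main obstacle into something essentially immediate. Your observation about the gap between the lemma's stated hypothesis $ab\neq(a-1)^3$ and the subsection hypotheses is accurate: the paper's proof of the conic case really uses the latter.
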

\begin{proof}

Let $X_\infty$ and $Y_\infty$ be the ideal points of the $X$-axis and the $Y$-axis, respectively.
It is straightforward to check that $X_\infty$ and $Y_\infty$ are the only ideal points of $\cQ_P$, and that they are both ordinary double points. The tangent lines of $\cQ_P$ at $X_\infty$ are $Y=0$ and $Y=a/t$; similarly, $X=0$ and $X=a/t$ are the tangent lines at $Y_\infty$. As $ab\neq (a-1)^3$, it is straightforward to check that none of such lines is a component of $\cQ_P$; hence, $\cQ_P$ has no linear component. Assume now that $\cQ_P$  splits into two irreducible conics, say $\cC_1$ and $\cC_2$. 

Without loss of generality we can assume that $X=0$ and $X=a/t$ are the tangents of $\cC_1$ and $\cC_2$ at $Y_\infty$, respectively. 

We first consider the case where $Y=0$ is the tangent of $\cC_1$ at $X_{\infty}$ and $Y=a/t$ is the tangent of $\cC_2$ at $X_{\infty}$. 
Then $\cC_1:XY+\epsilon=0$ and $\cC_2:(X-a/t)(Y-a/t)+ \bar \epsilon=0$ for some $\epsilon, \bar \epsilon \in \K^*$. So for some $\rho\in \K^*$
$$\rho g_P(X,Y)=(XY+\epsilon)((X-a/t)(Y-a/t)+ \bar \epsilon).$$
By comparing coefficients we obtain
$$\left\{ \begin{array}{l} 
-\rho t^4=1\\
-\rho t=- \epsilon \frac{a}{t}\\
\rho a=\epsilon\frac{a^2}{t^2}+\epsilon \bar{\epsilon}\\
\end{array} \right. 
\Rightarrow \left\{ \begin{array}{l} 
\frac{1}{t^3}=- \epsilon \frac{a}{t}\\
-\frac{a}{t^4}=\epsilon \frac{a^2}{t^2}+\epsilon \bar{\epsilon}\\
\end{array} \right. \Rightarrow \epsilon \bar \epsilon=0,$$
which is impossible. 

We now assume that $Y=a/t$ is the tangent of $\cC_1$ at $X_{\infty}$ and $Y=0$ is the tangent of $\cC_2$ at $X_{\infty}$. Then for some $\epsilon,\bar{\epsilon},\rho \in \K^*$
%
%

$$\rho g_P(X,Y)=\left(X\left(Y-\frac{a}{t}\right) +\epsilon\right)\left(\left(X -\frac{a}{t}\right)Y +\bar{\epsilon}\right).$$
By comparing coefficients we have
$$\left\{ \begin{array}{l} 
-\rho t^4=1\\
\rho at^3=-\frac{a}{t}\\
\rho \left(3t^2-3at^2-bt^2 \right)= \frac{a^2}{t^2}+\epsilon+\bar{\epsilon}\\
-\rho t=-\epsilon \frac{a}{t}\\
-\rho t=-\bar{\epsilon} \frac{a}{t}\\
\rho a=\epsilon \bar{\epsilon}\\
\end{array} \right. 
\Rightarrow \left\{ \begin{array}{l} 
\rho=-\frac{1}{t^4}\\
3a+b-3=a^2+\bar{\epsilon} t^2 + \epsilon t^2 \\
\frac{1}{t^3}=- \epsilon \frac{a}{t}\\
\frac{1}{t^3}=- \bar{\epsilon} \frac{a}{t}\\
-\frac{a}{t^4}=\epsilon \bar{\epsilon}\\
\end{array} \right.$$
$$\Rightarrow \left\{ \begin{array}{l} 
\epsilon t^2 = \bar \epsilon t^2= -1/a \\
-\frac{a}{t^4}=\frac{1}{a^2 t^4}\\
3a+b-3=a^2-\frac{1}{a}-\frac{1}{a} \\
\end{array} \right.
\Rightarrow 
\left\{ \begin{array}{l} 
a^3=-1 \\
ab=(a-1)^3-1\\
\end{array} \right., $$
which implies that both 
 $a^3=-1$
and $b=1-(a-1)^3$ hold, a contradiction.

 \end{proof}

Let $\bar u$ and $\bar z$ denote the rational functions of $\K(\cQ_P) $ associated to the affine coordinates $X$ and $Y$, respectively.
Then
\begin{equation}\label{eqff}
a(t^3\bar u^{2}\bar z+t^3\bar u\bar z^{2}-3t^2\bar u \bar z+1)-bt^2\bar u \bar z -t^4\bar u^{2}\bar z^{2}+3t^2\bar u\bar z-t\bar u-t\bar z=0.
\end{equation}
By the proof of Lemma \ref{9apr} both $X_\infty$ and $Y_\infty$ are ordinary double points of $\cQ_P$; hence, they both are the center of two linear places of $\K(\bar u,\bar z)$.
\begin{lemma}\label{L12} Let $\gamma_1$  be the linear  place of $\K(\bar u,\bar z)$ centered at $X_\infty$ with tangent $Y=a/t$, and  
 $\gamma_2$  the linear  place of $\K(\bar u,\bar z)$ centered at $X_\infty$ with tangent $Y=0$. Then
$$
v_{\gamma_1}(\bar u)=-1,\qquad v_{\gamma_1}(\bar z)=0,
$$
and
$$
v_{\gamma_2}(\bar u)=-1,\qquad v_{\gamma_2}(\bar z)>0.
$$
\end{lemma}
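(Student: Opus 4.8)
The plan is to read off the valuations of $\bar u$ and $\bar z$ at the two linear places centered at $X_\infty$ directly from the local geometry of the quartic $\cQ_P$ near $X_\infty$, using Proposition \ref{val} together with the description of the linear series $\cD$ cut out by lines. Since $X_\infty$ is the ideal point of the $X$-axis, a generic line through $X_\infty$ is of the form $Y=c$, and its intersection divisor $D(\ell)$ has $\bar z - c$ as the corresponding function (up to the correction divisor $E$, which is supported away from the behaviour we care about once we work in an affine chart adapted to $X_\infty$). The key observation is that $\bar u$ itself, viewed projectively, is essentially the coordinate whose pole divisor detects the line at infinity $Z=0$; so $v_{\gamma_i}(\bar u)$ is governed by the intersection multiplicity of $\cQ_P$ with $Z=0$ at $X_\infty$, split between the two places $\gamma_1,\gamma_2$.

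Concretely, I would pass to the affine chart centered at $X_\infty$: set $U=1/X$, $V=Y/X$ so that $X_\infty$ becomes the origin $(U,V)=(0,0)$, and rewrite the defining equation \eqref{eqff} in terms of $U,V$. From the proof of Lemma \ref{9apr} we know $X_\infty$ is an ordinary double point with the two tangent lines $Y=0$ and $Y=a/t$, which in the new coordinates become $V=0$ and $V=(a/t)U$ — two smooth branches through the origin meeting transversally, hence exactly the two places $\gamma_1$ (tangent $V=(a/t)U$, i.e.\ $Y=a/t$) and $\gamma_2$ (tangent $V=0$, i.e.\ $Y=0$). On each branch $U$ is a local parameter (the branch is smooth and not tangent to $U=0$, the latter being the line at infinity $Z=0$, which is \emph{not} a tangent at $X_\infty$ since the tangents are $Y=0$ and $Y=a/t$). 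Therefore $v_{\gamma_i}(U)=1$ on each branch, so $v_{\gamma_i}(\bar u) = v_{\gamma_i}(1/\bar x) \cdot(\text{sign})$ — more carefully, $\bar x = 1/U$ on both branches gives $v_{\gamma_1}(\bar u)=v_{\gamma_2}(\bar u)=-1$, which is the first assertion of each pair.

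For the $\bar z$-valuations I would use that along $\gamma_1$ the branch has tangent $Y=a/t$, so $V = Y/X \to 0$ but $Y = V/U \to a/t$ (a nonzero finite value), whence $\bar z$ is a unit at $\gamma_1$ and $v_{\gamma_1}(\bar z)=0$; along $\gamma_2$ the branch has tangent $Y=0$, i.e.\ $Y\to 0$, so $v_{\gamma_2}(\bar z)>0$. Equivalently, one can argue via Proposition \ref{val}: the line $Y=a/t$ passes through $X_\infty$ and is the tangent there of the branch $\gamma_1$, so $I(X_\infty,\cQ_P\cap\{Y=a/t\})$ assigns the extra intersection to $\gamma_1$ and zero to $\gamma_2$, forcing $v_{\gamma_1}(\bar z - a/t)>0$ hence $v_{\gamma_1}(\bar z)=0$; symmetrically $v_{\gamma_2}(\bar z)=v_{\gamma_2}(\bar z - 0)>0$.

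The only genuine subtlety — the step I expect to be the main obstacle — is making the bookkeeping with the correction divisor $E$ and the projective-to-affine change of coordinates airtight, i.e.\ verifying that in the chart at $X_\infty$ the relevant rational functions really are $1/\bar x$ and $\bar z$ (or $\bar z - c$) up to units, and that the two branches of the node are correctly matched to $\gamma_1$ and $\gamma_2$ by their tangents. Once the chart is fixed and the equation \eqref{eqff} is expanded in $U,V$, identifying the two branches is a routine Newton-polygon / implicit-function-theorem computation (transversal node, lowest-degree part factoring as $V\cdot(V-(a/t)U)$ times a unit), and the valuations drop out immediately; I would keep that computation brief and lean on Lemma \ref{9apr} for the fact that neither tangent line is a component, so that each branch genuinely meets its tangent with multiplicity $\ge 2$ and no component cancellation occurs.
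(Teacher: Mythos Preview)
Your approach is correct and reaches the same conclusions, but it proceeds differently from the paper. The paper stays entirely within the linear-series formalism of Section~\ref{order}: for each place $\gamma_i$ it writes down the three equations $v_{\gamma_i}(\bar z - a/t)+e_{\gamma_i}$, $v_{\gamma_i}(\bar u)+e_{\gamma_i}$, $v_{\gamma_i}(\bar z)+e_{\gamma_i}$, each equal to one of $0,\,j_1(\gamma_i)=1,\,j_2(\gamma_i)$ according to whether the corresponding line misses the center, passes through it generically, or is the tangent. A short case analysis (ruling out $v_{\gamma_1}(\bar z)>0$ and $v_{\gamma_1}(\bar z)<0$ by showing either forces $j_2=1$) then pins down $e_{\gamma_i}=1$ and the claimed valuations. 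Your route instead passes to the affine chart $U=1/X,\ V=Y/X$ at $X_\infty$, identifies the tangent cone $-t^4\,V(V-(a/t)U)$ of the rewritten equation, and reads the valuations off the two smooth branches directly via $\bar u=1/U$, $\bar z=V/U$.

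Both methods rest on the same geometric facts (ordinary double point, the two explicit tangents, $a\neq 0$ so that $U=0$ is not among them). Your argument is more hands-on and arguably more transparent for a reader comfortable with local branch parametrizations; the paper's has the advantage of never leaving the divisor framework set up in Section~\ref{order}, so that the bookkeeping with $E$ is handled once and for all rather than redone in a chart. One small slip: you write ``$\bar x=1/U$'' where you mean $\bar u=1/U$ (there is no $\bar x$ at this stage of the paper; $\bar u$ is the coordinate function for $X$ on $\cQ_P$). Otherwise the computation is sound, and the ``main obstacle'' you flag --- matching branches to tangents and checking that $U$ is a local parameter on each --- is indeed routine once the tangent cone is written down.
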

\begin{proof}
We keep the notation of Section \ref{order}. Here, the role of $\bar x$ and $\bar y$  is played by $\bar u$ and $\bar z$, respectively. Then
\begin{equation}\label{eq3}
 v_{\gamma_1}(\bar{z}-a/t )+e_{\gamma_1}=j_2(\gamma_1),
\end{equation}
\begin{equation}\label{eq2}
 v_{\gamma_1}\left(\bar{u}\right)+e_{\gamma_1}=0,
\end{equation}
\begin{equation}\label{eq4}
 v_{\gamma_1}(\bar{z})+e_{\gamma_1}=1.
\end{equation}
From here one can easily deduce that $v_{\gamma_1}(\bar z)=0$. In fact, 
if $v_{\gamma_1}(\bar z)>0$, then $v_{\gamma_1}(\bar z - a/t)=0$, and hence $e_{\gamma_1}=j_2(\gamma_1)$; also, \eqref{eq4} implies  $j_2(\gamma_1)=1$, a contradiction. On the other hand,
 if $v_{\gamma_1}(\bar z)<0$, then $v_{\gamma_1}(\bar z - a/t)=v_{\gamma_1}(\bar z)$; hence, \eqref{eq3} and \eqref{eq4} yield that $j_2(\gamma_1)=1$, a contradiction.
 From \eqref{eq4} it follows that $e_{\gamma_1}=1$; then   $v_{\gamma_1}(\bar u)=-1$ is obtained from \eqref{eq2}. 

As far as $\gamma_2$ is concerned, note that 
\begin{equation}\label{eq5}
 v_{\gamma_2}(\bar{z}-a/t )+e_{\gamma_2}=1,
\end{equation}
\begin{equation}\label{eq6}
 v_{\gamma_2}\left(\bar{u}\right)+e_{\gamma_2}=0,
\end{equation}
\begin{equation}\label{eq7}
 v_{\gamma_2}(\bar{z})+e_{\gamma_2}=j_2(\gamma_2).
\end{equation}
Then the assertion about $\gamma_2$ can be easily obtained from $j_2(\gamma_2)>1$.
 \end{proof}

As $\cQ_P$ is left invariant by the transformation $X\mapsto Y$, $Y\mapsto X$, the following result is obtained at once.
\begin{lemma}\label{L13} Let $\gamma_3$  be the linear  place of $\K(\bar u,\bar z)$ centered at $Y_\infty$ with tangent $X=a/t$, and  
 $\gamma_4$  the linear  place of $\K(\bar u,\bar z)$ centered at $Y_\infty$ with tangent $X=0$. Then
$$
v_{\gamma_3}(\bar u)=0,\qquad v_{\gamma_3}(\bar z)=-1,
$$
and
$$
v_{\gamma_4}(\bar u)>0,\qquad v_{\gamma_4}(\bar z)=-1.
$$
\end{lemma}
Let $Q_1=(0,a/t)$ and $Q_2=(a/t,0)$. It is easily seen that both $Q_1$ and $Q_2$ are simple points of $\cQ_P$, and hence they both are the center of precisely one linear place of $\K(\bar u,\bar z)$

\begin{lemma}\label{L14} Let $\gamma_5$ be the place of $\K(\bar u,\bar z)$ centered at $Q_1$, and $\gamma_6$ the place centered at $Q_2$. Then
$$
\div(\bar u)=\gamma_4+\gamma_5-\gamma_1-\gamma_2,
$$
and 
$$
\div(\bar z)=\gamma_2+\gamma_6-\gamma_3-\gamma_4.
$$
\end{lemma}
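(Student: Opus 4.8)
The plan is to pin down the divisors of $\bar u$ and $\bar z$ on the quartic $\cQ_P$ by first locating all possible zeros and poles, then counting multiplicities so that each divisor has degree $0$. From Lemmas \ref{L12} and \ref{L13} we already know the behaviour of $\bar u$ and $\bar z$ at the two ideal points $X_\infty$ and $Y_\infty$: the four places $\gamma_1,\gamma_2,\gamma_3,\gamma_4$ give us $v_{\gamma_1}(\bar u)=v_{\gamma_2}(\bar u)=-1$, $v_{\gamma_3}(\bar u)=0$, $v_{\gamma_4}(\bar u)>0$, and symmetrically $v_{\gamma_4}(\bar z)=v_{\gamma_3}(\bar z)=-1$, $v_{\gamma_1}(\bar z)=0$, $v_{\gamma_2}(\bar z)>0$. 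So $\bar u$ has exactly two simple poles, namely $\gamma_1$ and $\gamma_2$, and $\deg(\div(\bar u))=0$ forces the total zero divisor of $\bar u$ to have degree $2$.

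First I would argue that the only zeros of $\bar u$ are at $\gamma_4$ and $\gamma_5$. The affine zeros of $\bar u$ lie on the line $X=0$; substituting $X=0$ into $g_P(X,Y)$ gives $a - tY = 0$, i.e. the line $X=0$ meets $\cQ_P$ (affinely) only at $Q_1=(0,a/t)$, which is a simple point, hence carries the single linear place $\gamma_5$ with $v_{\gamma_5}(\bar u)>0$. At infinity, the line $X=0$ passes through $Y_\infty$ and its place there (by Lemma \ref{L13}) is $\gamma_4$, with $v_{\gamma_4}(\bar u)>0$. Proposition \ref{val} applied to the line $\ell:X=0$ says $\sum_{\gamma} v_\gamma(D(\ell))$ equals the intersection multiplicity of $\cQ_P$ with $\ell$, which is $4$; removing the contribution $e_\gamma$-shift carefully (or, more simply, noting $\deg \div(\bar u)=0$ and that $\bar u$ has total pole order $2$) we get $v_{\gamma_4}(\bar u)+v_{\gamma_5}(\bar u)=2$ with both terms positive, hence each equals $1$. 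This yields $\div(\bar u)=\gamma_4+\gamma_5-\gamma_1-\gamma_2$. The statement for $\bar z$ follows by the symmetry $X\mapsto Y$, $Y\mapsto X$ of $\cQ_P$, which swaps $\gamma_1\leftrightarrow\gamma_3$, $\gamma_2\leftrightarrow\gamma_4$, $\gamma_5\leftrightarrow\gamma_6$ and $\bar u\leftrightarrow\bar z$, giving $\div(\bar z)=\gamma_2+\gamma_6-\gamma_3-\gamma_4$.

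The main obstacle I anticipate is the bookkeeping that guarantees there are no \emph{other} places contributing to $\div(\bar u)$ — in particular, ruling out hidden branches of $\cQ_P$ on the line $X=0$ at infinity or extra places over $Q_1$. The key facts that make this clean are already available: $X_\infty$ and $Y_\infty$ are \emph{ordinary} double points of $\cQ_P$ (from the proof of Lemma \ref{9apr}), so each is the center of exactly two linear places and these are exhausted by $\gamma_1,\gamma_2$ and $\gamma_3,\gamma_4$ respectively; and $Q_1$, $Q_2$ are simple points, so each carries exactly one place. Combined with the degree-zero constraint on a principal divisor, this leaves no room for surprises, and the multiplicities are forced. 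One should double-check the sign/normalization in the substitution $X=0 \Rightarrow a-tY=0$ (using $a\neq 0$, which holds throughout Section \ref{alpha12}), but no genuinely hard computation is required.
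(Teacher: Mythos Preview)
Your proof is correct and follows essentially the same approach as the paper: identify $\gamma_5$ (resp.\ $\gamma_6$) as a zero of $\bar u$ (resp.\ $\bar z$), use that $\bar u$ and $\bar z$ have degree $2$ in $\K(\bar u,\bar z)$, and invoke Lemmas \ref{L12}--\ref{L13} to pin down the remaining zeros and poles. The paper's proof is terser---it extracts the degree-$2$ fact directly from \eqref{eqff} rather than from the pole count---while you supply more detail (the intersection with $X=0$, the symmetry argument for $\bar z$, and the explicit exclusion of extra places), but the underlying argument is the same.
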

\begin{proof}
Clearly, $\gamma_5$ is a zero of $\bar u$, whereas $\gamma_6$ is a zero of $\bar z$. From \eqref{eqff}, the number of zeros (and poles) of either $\bar u$ or $\bar z$ is $2$. Then the assertion follows from Lemmas \ref{L12} and \ref{L13}.
 \end{proof}

We now consider the extension
 $\K(\bar u,\bar z)(\bar y)$ of
$\K(\bar u,\bar z)$ defined by the equation $\bar y^m=\bar z$. Clearly, $\K(\bar u,\bar z, \bar y)=\K(\bar u, \bar y)$ holds.
By Lemma \ref{L14}, $\K(\bar u,\bar y)$ is a Kummer extension of $\K(\bar u,\bar z)$. For a place $\gamma$ of $\K(\bar u,\bar z)$ let $r_\gamma=\gcd(m,v_\gamma(\bar z))$. Then by Lemma \ref{L14} we have
$$
\left\{
\begin{array}{ll}
r_{\gamma}=1, & \text{ if }\gamma \in \{\gamma_2,\gamma_3,\gamma_4,\gamma_6\},\\
r_\gamma=m, & \text{otherwise}.
\end{array}
\right.
$$
By Proposition \ref{teo1} the genus of $\K(\bar u,\bar y)$ is equal to $2m-1+m(g-1)$, where $g$ denotes the genus of $\cQ_P$. Since $\cQ_P$ is a quartic with two double points, $g\le 1$ holds and hence the genus of 
$\K(\bar u,\bar z,\bar y)$ is less than or equal to $2m-1$.
Also, the  places of $\K(\bar u, \bar z)$ which ramify in the extension $\K(\bar u,\bar y):K(\bar u, \bar z)$ are precisely $\gamma_2,\gamma_3,\gamma_4,\gamma_6$; their ramification index is $m$.
For $i\in \{2,3,4,6\}$ let $\bar \gamma_i$ be the only place of $\K(\bar u, \bar y)$ lying over $\gamma_i$; also, let $\bar \gamma_1^1,\ldots, \bar \gamma_1^m$ be the places of $\K(\bar u,\bar y)$ lying over $\gamma_1$ and let $\bar \gamma_5^1,\ldots, \bar \gamma_5^m$ be the places of $\K(\bar u, \bar y)$ lying over $\gamma_5$.
Taking into account Lemma \ref{L14}, the divisor of $\bar u$ in $\K(\bar u, \bar y)$ can be easily computed.

\begin{lemma}\label{L145} In $\K(\bar u, \bar y)$,
$$
\div(\bar u)=m\bar \gamma_4+\sum_{i=1}^m \bar \gamma_5^i  -m\bar \gamma_2 -\sum_{i=1}^m \bar \gamma_1^i.
$$
\end{lemma}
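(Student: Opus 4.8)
The plan is to compute $\div(\bar u)$ in the extension $\K(\bar u,\bar y)$ directly from $\div(\bar u)$ in $\K(\bar u,\bar z)$, which is given by Lemma \ref{L14} as $\div(\bar u)=\gamma_4+\gamma_5-\gamma_1-\gamma_2$, by pulling back each of the four places through the Kummer extension $\K(\bar u,\bar y):\K(\bar u,\bar z)$ defined by $\bar y^m=\bar z$. The ramification data has already been assembled: the places $\gamma_2,\gamma_3,\gamma_4,\gamma_6$ are totally ramified (ramification index $m$, a single place above each), while $\gamma_1$ and $\gamma_5$ are unramified and each splits completely into $m$ places $\bar\gamma_1^1,\dots,\bar\gamma_1^m$ and $\bar\gamma_5^1,\dots,\bar\gamma_5^m$. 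So the whole argument is essentially bookkeeping with the conorm map $\mathrm{Con}_{\K(\bar u,\bar y)/\K(\bar u,\bar z)}$.

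First I would recall the general fact (a consequence of the fundamental identity for places and the definition of the conorm of a divisor) that for a place $\gamma$ of $\K(\bar u,\bar z)$ the conorm of $\gamma$ equals $\sum_{\gamma'|\gamma} e(\gamma'|\gamma)\,\gamma'$, and that taking divisors commutes with field extension in the sense that $\div_{\K(\bar u,\bar y)}(f)=\mathrm{Con}(\div_{\K(\bar u,\bar z)}(f))$ for any $f\in\K(\bar u,\bar z)$; since $\bar u$ lies in $\K(\bar u,\bar z)$, this applies. Then I would substitute: $\gamma_4\mapsto m\bar\gamma_4$ (totally ramified), $\gamma_2\mapsto m\bar\gamma_2$ (totally ramified), $\gamma_1\mapsto \sum_{i=1}^m\bar\gamma_1^i$ (splits completely, all $e=1$), and $\gamma_5\mapsto \sum_{i=1}^m\bar\gamma_5^i$ (splits completely, all $e=1$). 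Assembling with the signs from Lemma \ref{L14} gives exactly
$$
\div(\bar u)=m\bar\gamma_4+\sum_{i=1}^m\bar\gamma_5^i-m\bar\gamma_2-\sum_{i=1}^m\bar\gamma_1^i,
$$
which is the claim. As a sanity check one verifies that the degree is zero ($m+m-m-m=0$) and that the number of zeros of $\bar u$ equals $2m$, consistent with $[\K(\bar u,\bar y):\K(\bar u,\bar z)]=m$ applied to the two zeros of $\bar u$ downstairs.

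The only genuine point needing care — and hence the main (mild) obstacle — is to be sure that the behavior of $\gamma_2$ and $\gamma_4$ in the extension really is total ramification with a unique place above, and likewise that $\gamma_1,\gamma_5$ split completely. This was already established in the discussion preceding the lemma: by Lemma \ref{L14}, $v_{\gamma_2}(\bar z)=1$ and $v_{\gamma_4}(\bar z)=-1$, so $r_{\gamma_2}=r_{\gamma_4}=\gcd(m,\pm1)=1$, whence Proposition \ref{teo1}(i) gives $e=m/r_\gamma=m$; since $e=m=[F':F]$ the fundamental identity forces a single place above. For $\gamma_1$ and $\gamma_5$ one has $v_{\gamma_1}(\bar z)=v_{\gamma_5}(\bar z)=0$ (again from Lemma \ref{L14}), so $r_\gamma=m$ and $e=1$; these places are moreover $\fq$-rational and $\bar z$ is a nonzero $m$-th power in the local ring there, so the extension splits completely into $m$ distinct places, which is precisely how $\bar\gamma_1^1,\dots,\bar\gamma_1^m$ and $\bar\gamma_5^1,\dots,\bar\gamma_5^m$ were introduced. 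Nothing beyond quoting these facts and Proposition \ref{teo1} is required, so the proof is short.
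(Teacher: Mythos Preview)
Your proposal is correct and follows exactly the approach the paper takes: the paper simply says the divisor of $\bar u$ in $\K(\bar u,\bar y)$ ``can be easily computed'' from Lemma~\ref{L14} and the ramification data already established, and your argument is precisely the unpacking of that one-line remark via the conorm map. One small cosmetic point: your appeal to $\fq$-rationality when arguing that $\gamma_1$ and $\gamma_5$ split completely is unnecessary (and slightly off), since the constant field here is the algebraically closed field $\K$, so $f=1$ automatically and $e=1$ forces $g=m$ by the fundamental identity alone.
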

We can now apply Proposition \ref{teo1}, together with Lemma \ref{L145}, in order to  deduce that the extension $\K(\bar u, \bar y)(\bar x)=\K(\bar y,\bar x)$ of
$\K(\bar u, \bar y)$ defined by the equation $\bar x^m=\bar u$ is a Kummer extension of $\K(\bar u,\bar y)$ of genus 
$$
1+m\Big(g'-1+\frac{1}{2}\big(1-\frac{1}{m}\big)2m\Big),
$$
where $g'$ is the genus of $\K(\bar u,  \bar y)$. Taking into account that $g'\le 2m-1$, the following result is obtained.
\begin{lemma}\label{L15} The genus of $\K(\bar x,\bar y)$ is at most $3m^2-3m+1$.
\end{lemma}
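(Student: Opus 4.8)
The plan is to build $\K(\bar x,\bar y)$ from $\K(\cQ_P)$ by two successive Kummer extensions of degree $m$, tracking the genus at each step via Proposition~\ref{teo1}(ii). All the preparatory work has already been done in Lemmas~\ref{9apr}--\ref{L145}: $\cQ_P$ is absolutely irreducible (Lemma~\ref{9apr}), it is a plane quartic with exactly two double points $X_\infty$, $Y_\infty$, so its genus $g$ satisfies $g \le \binom{4-1}{2} - 2 = 1$; the divisors of $\bar u$ and $\bar z$ on $\K(\cQ_P)$ are made explicit in Lemma~\ref{L14}; and the divisor of $\bar u$ after the first extension is given in Lemma~\ref{L145}.

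First I would record the genus $g_1$ of $\K(\bar u,\bar y) = \K(\bar u,\bar z)(\bar y)$, $\bar y^m = \bar z$. The ramification data is exactly as computed just before Lemma~\ref{L145}: the only ramified places of $\K(\bar u,\bar z)$ are $\gamma_2,\gamma_3,\gamma_4,\gamma_6$, each with $r_\gamma = 1$ (hence ramification index $m$), and all other places have $r_\gamma = m$. Plugging into the genus formula of Proposition~\ref{teo1}(ii),
\begin{equation*}
g_1 = 1 + m\Big(g - 1 + \tfrac12\cdot 4\cdot\big(1-\tfrac1m\big)\Big) = 1 + m\big(g+1\big) - 2 = m g + m - 1 \le 2m-1,
\end{equation*}
using $g \le 1$. (One should also note $\bar z$ is a genuine non-$e$-th power in $\K(\bar u,\bar z)$ for every $e \mid m$, $e>1$, since by Lemma~\ref{L14} it has a simple zero at $\gamma_6$; this legitimizes the application of Proposition~\ref{teo1}.)

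Next I would pass to $\K(\bar y,\bar x) = \K(\bar u,\bar y)(\bar x)$ with $\bar x^m = \bar u$. By Lemma~\ref{L145} the divisor of $\bar u$ in $\K(\bar u,\bar y)$ has the $m$ simple poles $\bar\gamma_1^1,\dots,\bar\gamma_1^m$, the $m$ simple zeros $\bar\gamma_5^1,\dots,\bar\gamma_5^m$, one zero $m\bar\gamma_4$ and one pole $m\bar\gamma_2$; so the places with $r_\gamma = \gcd(m, v_\gamma(\bar u)) = 1$ are exactly the $2m$ places $\bar\gamma_1^i, \bar\gamma_5^i$, each contributing $1-\tfrac1m$, while every other place (including $\bar\gamma_2,\bar\gamma_4$, where $v_\gamma(\bar u)$ is $\pm m$) contributes $0$. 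Again $\bar u$ is not an $e$-th power for $e\mid m$, $e>1$ (it has simple zeros at the $\bar\gamma_5^i$). Proposition~\ref{teo1}(ii) then gives the genus $g_2$ of $\K(\bar x,\bar y)$ as
\begin{equation*}
g_2 = 1 + m\Big(g_1 - 1 + \tfrac12\cdot 2m\cdot\big(1-\tfrac1m\big)\Big) = 1 + m\big(g_1 + m - 2\big) = m g_1 + m^2 - 2m + 1.
\end{equation*}
Substituting $g_1 \le 2m-1$ yields $g_2 \le m(2m-1) + m^2 - 2m + 1 = 3m^2 - 3m + 1$, which is the claim.

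There is no serious obstacle here, since the irreducibility and all divisor computations are already in place; the statement is essentially a bookkeeping consequence of two applications of Proposition~\ref{teo1}. The only points requiring a word of care are (a) checking the ``non-$e$-th-power'' hypothesis of Proposition~\ref{teo1} at each stage — handled by exhibiting a place at which the relevant function has valuation $\pm 1$ — and (b) confirming that $\K(\bar x,\bar y) = \K(\bar u,\bar y)(\bar x)$ really is a degree-$m$ extension (equivalently, that $\bar u$ has a pole/zero of order prime to... in fact, order not divisible by $m$ at the $\bar\gamma_1^i$), which again follows from Lemma~\ref{L145}. Assembling these, the bound $3m^2-3m+1$ follows.
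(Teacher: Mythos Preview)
Your argument is correct and follows essentially the same route as the paper: two successive degree-$m$ Kummer extensions of $\K(\cQ_P)$, first by $\bar y^m=\bar z$ and then by $\bar x^m=\bar u$, applying Proposition~\ref{teo1}(ii) at each stage with the ramification data from Lemmas~\ref{L14} and~\ref{L145}. You are slightly more explicit than the paper in verifying the non-$e$-th-power hypothesis at each step, but the overall structure and the computations match.
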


\begin{proposition}\label{P10} Assume that $a\neq 0$ and either $a^3\neq -1$ or $b\neq 1-(a-1)^3$. 
Then the curve $\cC_P$ is an absolutely irreducible curve defined over $\fq$ with genus less than or equal to $ 3m^2-3m+1$.
\end{proposition}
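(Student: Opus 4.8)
The plan is to obtain Proposition \ref{P10} as a direct consequence of the function-field analysis carried out in Lemmas \ref{9apr}--\ref{L15}, by identifying $\cC_P$ with the curve whose function field is $\K(\bar x,\bar y)$. First I would observe that the polynomial $f_{a,b,t,m}(X,Y)$ of \eqref{curva2} is obtained from $g_P(X,Y)$ of Lemma \ref{9apr} by the substitution $X\mapsto X^m$, $Y\mapsto Y^m$; equivalently, setting $\bar u=\bar x^m$ and $\bar z=\bar y^m$ in \eqref{eqff} gives precisely the defining relation $f_{a,b,t,m}(\bar x,\bar y)=0$. Hence the field $\K(\bar x,\bar y)$ constructed in Section \ref{sec22} is the function field of (an irreducible component of) the plane curve $\cC_P$, and Lemma \ref{L15} already bounds its genus by $3m^2-3m+1$. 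It remains to check two things: that $\K(\bar x,\bar y)$ really is the function field attached to the \emph{full} curve $\cC_P$ (i.e.\ that $f_{a,b,t,m}$ is absolutely irreducible, not merely that it has an irreducible component with this function field), and that the construction can be carried out over $\fq$ rather than only over $\K$.

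For the irreducibility, I would argue by degrees. The tower $\K(\bar u,\bar z)\subset \K(\bar u,\bar z,\bar y)\subset \K(\bar u,\bar z,\bar y,\bar x)=\K(\bar x,\bar y)$ has degree $m$ at each step by Proposition \ref{teo1} (the hypotheses "$\bar z$ is not an $e$-th power for $e\mid m$, $e>1$" and likewise for $\bar u$ must be verified, but they follow from Lemma \ref{L14}: $\bar z$ and $\bar u$ each have simple zeros and poles, so their valuations are coprime to $m$ at those places). Therefore $[\K(\bar x,\bar y):\K(\bar u,\bar z)]=m^2$, and since $\K(\bar u,\bar z)=\K(\bar u)[\bar z]$ is the function field of the absolutely irreducible quartic $\cQ_P$ (Lemma \ref{9apr}), $\K(\bar x,\bar y)$ is a field, i.e.\ an integral domain quotient; concretely, $[\K(\bar x,\bar y):\K(\bar x)]$ equals $m\cdot\deg_Y g_P=2m$, which forces the polynomial $f_{a,b,t,m}(X,Y)$, of $Y$-degree exactly $2m$, to be irreducible over $\K(X)$ and hence (being primitive in $Y$) absolutely irreducible. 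I would spell out this degree bookkeeping carefully, since a gcd-of-valuations check is what makes the Kummer steps genuinely of degree $m$.

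For descent to $\fq$, I would invoke Proposition \ref{teo1cor}'s philosophy: $\cQ_P$ is defined over $\fq$ because $g_P$ has coefficients in $\fq$; the successive Kummer extensions are adjoined by the equations $\bar y^m=\bar z$ and $\bar x^m=\bar u$ with $\bar z,\bar u\in\fq(\cQ_P)$, so each extension is defined over $\fq$, and therefore $\cC_P$, being the plane model of $\K(\bar x,\bar y)$ cut out by the $\fq$-rational polynomial $f_{a,b,t,m}$, is an absolutely irreducible curve defined over $\fq$. Finally I would package the genus bound: by Lemma \ref{L15} the genus is at most $3m^2-3m+1$, which is the claimed statement.

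The main obstacle I anticipate is not any single hard estimate but rather the bookkeeping that links the abstract function field $\K(\bar x,\bar y)$ back to the \emph{specific} plane curve $\cC_P$ with its prescribed equation, and in particular making sure the two Kummer steps are each exactly of degree $m$ (rather than a proper divisor of $m$), since the absolute irreducibility of $f_{a,b,t,m}$ hinges on the total degree being the full $m^2$. This is exactly where the valuation data of Lemma \ref{L14}, via the condition $(m,v_\gamma(\bar z))$, $(m,v_\gamma(\bar u))$ being sometimes $1$, is essential: it guarantees $\bar z$ and $\bar u$ are not nontrivial powers in the respective fields, so Proposition \ref{teo1} applies with the stated ramification and degree. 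The reducible degenerate case $a^3=-1$, $b=1-(a-1)^3$ is correctly excluded by hypothesis and handled separately in Proposition \ref{P38}, so no extra care is needed there; one should, however, double-check that the remaining hypothesis $a\neq 0$ is used (it enters Lemma \ref{9apr} through $ab\neq(a-1)^3$ and the tangent-line analysis, and it is what makes $Q_1,Q_2$ genuine affine points distinct from the nodes).
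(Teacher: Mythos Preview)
Your proposal is correct and follows essentially the same approach as the paper: compute $[\K(\bar x,\bar y):\K(\bar x)]=[\K(\bar x,\bar y):\K(\bar y)]=2m$ via the tower of Kummer extensions over $\K(\bar u,\bar z)$ (the paper displays this as a diagram of fields), conclude that $f_{a,b,t,m}$ is absolutely irreducible because its $Y$-degree and $X$-degree are exactly $2m$, and read off the genus bound from Lemma~\ref{L15}. The only minor imprecision is that for the second Kummer step (adjoining $\bar x$ over $\K(\bar u,\bar y)$) the valuation data ensuring $\bar u$ is not a nontrivial power come from Lemma~\ref{L145} rather than Lemma~\ref{L14}, since one needs $v_{\bar\gamma}(\bar u)$ coprime to $m$ at some place of $\K(\bar u,\bar y)$, not of $\K(\bar u,\bar z)$.
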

\begin{proof}
Suppose  that $f_{a,b,t,m}(X,Y)$ admits a non-trivial factorization
$$
f_{a,b,t,m}(X,Y)=g_1(X,Y)^{m_1} \cdots g_s(X,Y)^{m_s},
$$
By construction,  $f_{a,b,t,m}(\bar x,\bar y)=0$ holds and hence 
 there exists $i_0 \in \{1, \ldots, s\}$ such that $g_{i_0}(\bar x, \bar y)=0$.
Clearly, either  $\deg_{X}(g_{i_0})<2m$ or  $\deg_{Y}(g_{i_0})<2m$
 holds. To get a contradiction, it is then enough to show
 that the extensions $\K(\bar x,\bar y):\K(\bar x)$ and
$\K(\bar x,\bar y):\K(\bar y)$ have both degree $2m$. 

From the diagram 

\begin{center}

\setlength{\unitlength}{.6cm}
\begin{picture}(15,10)(0,0)


\put(7,10){$\K(\bar{x},\bar{y})$}


\put(9.7,9.6){$m$}

\put(9.7,6.6){$m$}

\put(0.9,5.1){$m$}


\put(4.5,3.4){$2$}

\put(10.5,3.4){$2$}

\put(14,4.7){$m$}


\put(2,8.2){\line(3,1){5}}

\put(7,5){$\K(\bar{u},\bar{z})$}

\put(1.5,7.2){\line(0,-1){4.2}}

\put(7.7,9.7){\line(0,-1){4.2}}

\put(1,7.5){$\K(\bar{x})$}

\put(8.4,9.9){\line(5,-2){2.3}}

\put(8.2,5.6){\line(1,1){2.6}}   

\put(10.8,8.5){$\K(\bar{u},\bar{z},\bar{y})=\K(\bar{u},\bar{y})$}

\put(13.3,6.5){$\K(\bar{y})$}

\put(13.8,8.3){\line(0,-1){1.3}}

\put(1,2.5){$\K(\bar{u})$}

\put(13.8,6.2){\line(0,-1){3.1}}

\put(2,3.1){\line(3,1){5}}

\put(8.3,4.8){\line(3,-1){5}}

\put(13.3,2.5){$\K(\bar{z})$}

\end{picture}
\end{center}
it follows that $[\K(\bar x,\bar y):\K(\bar u)]=[\K(\bar x,\bar y):\K(\bar z)]=2m^2$; hence both $[\K(\bar x,\bar y):\K(\bar y)]=2m$ and $[\K(\bar x,\bar y):\K(\bar x)]=2m$ hold.


Then $\K(\bar x,\bar y)$ is the function field of $\cC_P$, and the assertion on the genus follows from Lemma \ref{L15}.
 \end{proof}
\subsection{$a=0$}\label{caso3}

\begin{lemma}\label{L16}
The plane quartic curve $\cQ_P$ with equation
$$
-bt^2XY -t^4X^{2}Y^{2}+3t^2XY-tX-tY=0
$$
is absolutely irreducible of genus $g\le 1$.
\end{lemma}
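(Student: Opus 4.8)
The plan is to handle the case $a=0$ by the same circle of ideas used for the quartic $\cQ_P$ in Lemma~\ref{9apr}, but the situation is considerably simpler because the polynomial $-bt^2XY-t^4X^2Y^2+3t^2XY-tX-tY$ has lower degree in each variable than the generic $f_{0,b,t,m}$. First I would locate the ideal points: homogenizing, the degree-$4$ part is $-t^4X^2Y^2$, so the only points at infinity are $X_\infty$ and $Y_\infty$, each entering with multiplicity $2$; a short tangent-line computation (as in the proof of Lemma~\ref{9apr}, now with $a=0$) shows $Y=0$ is a tangent at $X_\infty$ and $X=0$ is a tangent at $Y_\infty$. Since $a=0$ these two tangents coincide with the coordinate axes, and one checks directly from the equation that $X=0$ forces $-tY=0$ hence $Y=0$, so $X=0$ is not a component; likewise $Y=0$ is not a component. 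Hence $\cQ_P$ has no linear component.

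Next I would rule out a splitting into two conics. As in Lemma~\ref{9apr}, assume $\cQ_P=\cC_1\cup\cC_2$ with $\cC_1,\cC_2$ irreducible conics; the two ideal points must be distributed so that the tangents at infinity match up. Writing $\rho\,g_P(X,Y)$ as the product of two conic equations with the prescribed tangent lines at $X_\infty$ and $Y_\infty$ (the axes), comparison of coefficients gives a small linear/nonlinear system in the undetermined constants $\epsilon,\bar\epsilon,\rho$. I expect exactly the same phenomenon as in Lemma~\ref{9apr}: the system forces $\epsilon\bar\epsilon=0$ (or an analogous contradiction), so no such factorization exists. Since a reducible plane quartic with no linear component must split into two conics, absolute irreducibility follows.

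For the genus bound, once absolute irreducibility is established, $\cQ_P$ is a plane quartic and hence has arithmetic genus $3$; each of the two ordinary double points (at $X_\infty$ and $Y_\infty$) drops the geometric genus by $1$, giving $g\le 3-2=1$. This is exactly the same reasoning used right after Lemma~\ref{L14} for the quartic in Section~\ref{alpha12}. I would note that the singularities at infinity being \emph{ordinary} double points is what guarantees the full drop of $2$; this is read off from the tangent-line computation in the first step (two distinct tangents at each point, namely an axis and a line of the form $X=c$ or $Y=c$ with $c\neq0$ coming from the cross terms).

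The main obstacle is the coefficient-comparison bookkeeping in the conic-splitting step: one has to be careful that with $a=0$ some of the tangent lines degenerate onto the axes, so the two sub-cases of Lemma~\ref{9apr} partially collapse, and one must make sure the remaining case still yields a contradiction rather than an accidental identity. A secondary point to verify carefully is that none of the candidate linear components actually divides $g_P$ when $a=0$ — this uses $b$ being such that $P=(0,b)$ is a genuine point, i.e. the standing hypothesis $ab\neq(a-1)^3$, which for $a=0$ reads $0\neq -1$ and is automatic, so in fact no restriction on $b$ is needed here.
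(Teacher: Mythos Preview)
Your proposal contains a genuine error in the geometry of the singular points. When $a=0$, the two tangent lines at $X_\infty$ that you found in Lemma~\ref{9apr}, namely $Y=0$ and $Y=a/t$, \emph{coincide}: the tangent cone at $X_\infty$ is $-t^4Y^2$, a double line. So $X_\infty$ (and by symmetry $Y_\infty$) is a \emph{cuspidal} double point, not an ordinary one. Your claim that there are ``two distinct tangents at each point, namely an axis and a line of the form $X=c$ or $Y=c$ with $c\neq0$'' is simply false here; the ``cross terms'' you refer to come from $at^3X^2Y$ and vanish when $a=0$. Consequently the case analysis you import from Lemma~\ref{9apr} --- distributing four distinct tangent lines between two conics --- does not get off the ground, and the expected contradiction $\epsilon\bar\epsilon=0$ is not what arises.

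A coefficient comparison \emph{can} still be made to work once you use the correct tangent structure: both putative conics would need tangent $Y=0$ at $X_\infty$ and $X=0$ at $Y_\infty$, forcing each to have the form $XY+\epsilon=0$; the product then has no linear terms in $X$ or $Y$, whereas $g_P$ contains $-tX-tY$. But this is not the argument you sketched. The paper instead exploits the cuspidal nature directly: since $I(X_\infty,\cQ_P\cap\{Y=0\})=3$, there can be only one place (of order $2$) centered at $X_\infty$, so exactly one irreducible component passes through it and has a double point there; such a component cannot be a smooth conic, and since there are no linear components it must be all of $\cQ_P$. Your genus bound $g\le1$ survives because simple cusps also have $\delta$-invariant $1$, but your stated justification (ordinary double points) does not.
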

\begin{proof}
It is easily seen that $\cQ_P$ does not admit any linear component. Note that 
both $X_\infty$ and $Y_\infty$ are cuspidal double points of $\cQ_P$. The tangent line at $X_\infty$ is $Y=0$, and the intersection multiplicity of $\cQ_P$ and $Y=0$ at $X_\infty$ is equal to $3$; similarly, $X=0$ is the tangent line at $Y_\infty$ and $\mathrm I(Y_\infty, \cQ_P\cap \{X=0\})=3$. Therefore,  precisely one irreducible component $\cC$ of $\cQ_P$ passes through $Y_\infty$; also,  $Y_\infty$ is a double point of $\cC$ and there is only one place of $\K(\cC)$ centered at $Y_\infty$. Then $\cC$ is a curve of degree greater than $2$. Since $\cQ_P$ does not have any linear component, the only possibility is that the degree of $\cC$ is four, that is, $\cC=\cQ_P$. This shows that 
$\cQ_P$ is absolutely irreducible. As $\cQ_P$ is a quartic with at least two singular points, its genus $g$ is less than or equal to $1$.
 \end{proof}
Let $\K(\bar u,\bar z)$ be the function field of $\cQ_P$. Here, $\bar u$ and $\bar z$ are rational functions on $\cQ_P$ such that 
$$
-bt^2\bar u \bar z -t^4\bar u^{2}\bar z^{2}+3t^2\bar u \bar z-t\bar u-t\bar z=0.
$$
Let $\gamma_1$ be the only place of $\K(\bar u,\bar z)$ centered at the (simple) point of $\cQ_P$ with coordinates $(0,0)$. From the proof of Lemma \ref{L16} there is precisely one place of $\K(\bar u,\bar z)$, say $\gamma_2$, centered at $Y_\infty$. As $\cQ_P$ is left invariant by the transformation $X\mapsto Y$, $Y\mapsto X$, the same holds for $X_\infty$; we denote by $\gamma_3$ the only place of $\K(\bar u,\bar z)$ centered at $X_\infty$. 
Arguing as in the proofs of Lemmas \ref{L12}, \ref{L13} and \ref{L14}, the divisors of both $\bar u$ and $\bar z$ can be computed.

\begin{lemma}\label{L17}
In $\K(\bar u, \bar z)$,
$$
\div(\bar u)=\gamma_1+\gamma_2-2\gamma_3,\qquad \div(\bar z)=\gamma_1+\gamma_3-2\gamma_2.
$$
\end{lemma}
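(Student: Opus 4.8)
The plan is to mimic the arguments in the proofs of Lemmas \ref{L12}, \ref{L13} and \ref{L14}, working throughout with the linear series $\cD$ cut out by lines as set up in Section \ref{order}, with $\bar u$, $\bar z$ now in the roles of $\bar x$, $\bar y$. Two global remarks organise everything. First, the polynomial defining $\cQ_P$ has degree $2$ both in $X$ and in $Y$ (the coefficient of $X^2Y^2$ is $-t^4\neq 0$); since $\cQ_P$ is absolutely irreducible by Lemma \ref{L16}, the extensions $\K(\bar u,\bar z):\K(\bar u)$ and $\K(\bar u,\bar z):\K(\bar z)$ both have degree $2$, so each of $\bar u$, $\bar z$ has exactly two zeros and exactly two poles in $\K(\bar u,\bar z)$, counted with multiplicity. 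Second, putting $Z=0$ in the homogenisation leaves $t^4X^2Y^2=0$, so $X_\infty$ and $Y_\infty$ are the only points of $\cQ_P$ at infinity, while $(0,0)$ is the only affine point of $\cQ_P$ on the line $X=0$ and the only one on $Y=0$. Hence every zero and every pole of $\bar u$, and of $\bar z$, occurs among $\gamma_1$, $\gamma_2$, $\gamma_3$, and it suffices to compute the valuations $v_{\gamma_i}(\bar u)$, $v_{\gamma_i}(\bar z)$ and then count degrees.

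I would first dispatch $\gamma_1$: the point $(0,0)$ is a simple point of $\cQ_P$ whose tangent line is $X+Y=0$ (the linear part of the defining equation is $-tX-tY$), so both $X=0$ and $Y=0$ are non-tangent lines through the centre of $\gamma_1$, and, $\bar u$ and $\bar z$ being regular and vanishing there, this gives $e_{\gamma_1}=0$ and $v_{\gamma_1}(\bar u)=v_{\gamma_1}(\bar z)=1$. Next, for $\gamma_2$: by Lemma \ref{L16} it is the only place centred at the double point $Y_\infty$, so its order is $j_1(\gamma_2)=2$; its tangent line is $X=0$, and since $\mathrm I(Y_\infty,\cQ_P\cap\{X=0\})=3$ (from the proof of Lemma \ref{L16}) and there is a single place over $Y_\infty$, Proposition \ref{val} yields $j_2(\gamma_2)=3$. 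Arguing exactly as in the proof of Lemma \ref{L12}, one writes the relations $v_{\gamma_2}(\bar z)+e_{\gamma_2}=0$ (the lines $Y=c$ miss $Y_\infty$), $v_{\gamma_2}(\bar u)+e_{\gamma_2}=j_2(\gamma_2)=3$ (the tangent line $X=0$), and $v_{\gamma_2}(\bar u-c)+e_{\gamma_2}=j_1(\gamma_2)=2$ for $c\neq0$ (a non-tangent line $X=c$ through $Y_\infty$); comparing these — using that $\bar u-c$ is a constant translate of $\bar u$ — forces $e_{\gamma_2}=2$, and hence $v_{\gamma_2}(\bar u)=1$, $v_{\gamma_2}(\bar z)=-2$. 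Finally, $\cQ_P$ is left invariant by $X\leftrightarrow Y$, which interchanges $\gamma_2$ and $\gamma_3$, so $v_{\gamma_3}(\bar u)=-2$ and $v_{\gamma_3}(\bar z)=1$.

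Collecting these valuations, $\bar u$ has its only pole at $\gamma_3$ (of order $2$) and simple zeros at $\gamma_1$ and $\gamma_2$; since both the zero divisor and the pole divisor have degree $2$, this exhausts $\div(\bar u)$, giving $\div(\bar u)=\gamma_1+\gamma_2-2\gamma_3$, and the identity $\div(\bar z)=\gamma_1+\gamma_3-2\gamma_2$ follows by the $X\leftrightarrow Y$ symmetry. The step I expect to need the most care is the local analysis at the cuspidal double points $X_\infty$, $Y_\infty$: one must be certain that the single place there has order exactly $2$, that its tangent line is the claimed one, and — the point on which the degree count relies — that the coordinate function cutting out that tangent line actually has a simple zero at the place rather than a pole. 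All of this, however, is already supplied by the intersection-multiplicity data in the proof of Lemma \ref{L16} together with Proposition \ref{val} and the order-sequence bookkeeping of Section \ref{order}; no genuinely new ingredient is required.
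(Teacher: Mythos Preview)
Your proof is correct and follows precisely the approach the paper indicates: the paper's own proof of Lemma~\ref{L17} consists of a single sentence referring back to the arguments of Lemmas~\ref{L12}, \ref{L13} and~\ref{L14}, and you have carried out exactly that programme in detail. Your use of the order sequence at the cuspidal places $\gamma_2$, $\gamma_3$ (reading off $j_1=2$, $j_2=3$ from the intersection data supplied in the proof of Lemma~\ref{L16}) together with the degree-$2$ count is the intended mechanism, and the $X\leftrightarrow Y$ symmetry finishes the job.
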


In order to prove that $\cC_P$ is absolutely irreducible, the same arguments as in Section \ref{alpha12} can be used.
Let
 $\K(\bar u,\bar z)(\bar y)$ be the extension of
$\K(\bar u,\bar z)$ defined by the equation $\bar y^m=\bar z$. Clearly, $\K(\bar u,\bar z, \bar y)=\K(\bar u, \bar y)$ holds.
By Lemma \ref{L17} $\K(\bar u,\bar y)$ is a Kummer extension of $\K(\bar u,\bar z)$. 
As $m$ is odd, by Lemma \ref{L17} we have that 
$$
\left\{
\begin{array}{ll}
r_{\gamma}=1, & \text{ if }\gamma \in \{\gamma_1,\gamma_2,\gamma_3\},\\
r_\gamma=m, & \text{otherwise}.
\end{array}
\right.
$$
By Proposition \ref{teo1} the genus  of $\K(\bar u,\bar y)$ is equal to 
\begin{equation}\label{gen21}
g'=
m(g-1)+\frac{3m-1}{2}, 
\end{equation}
where $g\in \{0,1\}$ denotes the genus of $\cQ_P$. 
Also, the  places of $\K(\bar u, \bar z)$ which ramify in the extension $\K(\bar u,\bar y):\K(\bar u, \bar z)$ are precisely $\gamma_1,\gamma_2,\gamma_3$; their ramification index is $m$.
For  $i\in \{1,2,3\}$ let $\bar \gamma_i$ be the only place of $\K(\bar u, \bar y)$ lying over $\gamma_i$.
Taking into account Lemma \ref{L17}, the divisors of both $\bar u$ and $\bar y$ in $\K(\bar u, \bar y)$ can be easily computed.

\begin{lemma} In $\K(\bar u, \bar y)$, 
$$
\div(\bar u)=m\bar \gamma_1+m\bar \gamma_2-2m\bar \gamma_3,\qquad 
\div(\bar y)= \bar \gamma_1+\bar \gamma_3-2\bar \gamma_2. 
$$
\end{lemma}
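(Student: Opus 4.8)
The plan is simply to transport the divisor identities of Lemma~\ref{L17} along the Kummer extension $\K(\bar u,\bar y):\K(\bar u,\bar z)$ and to use the defining relation $\bar y^m=\bar z$. I will use two standard facts repeatedly. First, if $\gamma'$ is a place of $\K(\bar u,\bar y)$ lying over a place $\gamma$ of $\K(\bar u,\bar z)$, then $v_{\gamma'}(w)=e(\gamma'|\gamma)\,v_{\gamma}(w)$ for every $w\in\K(\bar u,\bar z)$; in particular $\gamma'$ is a zero (resp.\ a pole) of such a $w$ precisely when $\gamma$ is, so the support of $\div(w)$ in $\K(\bar u,\bar y)$ lies over the support of $\div(w)$ in $\K(\bar u,\bar z)$. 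Second, as recalled just above the statement, the only places of $\K(\bar u,\bar z)$ ramified in the extension are $\gamma_1,\gamma_2,\gamma_3$, each with $e(\bar\gamma_i|\gamma_i)=m$, and $\bar\gamma_i$ is the unique place lying over $\gamma_i$.

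For $\bar u$: by Lemma~\ref{L17} the divisor $\div(\bar u)=\gamma_1+\gamma_2-2\gamma_3$ in $\K(\bar u,\bar z)$ is supported on $\{\gamma_1,\gamma_2,\gamma_3\}$, hence $\div(\bar u)$ in $\K(\bar u,\bar y)$ is supported on $\{\bar\gamma_1,\bar\gamma_2,\bar\gamma_3\}$. Using $v_{\bar\gamma_i}(\bar u)=e(\bar\gamma_i|\gamma_i)\,v_{\gamma_i}(\bar u)=m\,v_{\gamma_i}(\bar u)$ and the coefficients $1,1,-2$ of $\div(\bar u)$ at $\gamma_1,\gamma_2,\gamma_3$ yields $\div(\bar u)=m\bar\gamma_1+m\bar\gamma_2-2m\bar\gamma_3$, which is consistently of degree $0$.

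For $\bar y$: from $\bar y^m=\bar z$ one gets $m\,v_{\gamma'}(\bar y)=v_{\gamma'}(\bar z)$ at every place $\gamma'$ of $\K(\bar u,\bar y)$, so $\bar y$ has a zero or a pole exactly at the places over the support of $\div(\bar z)=\gamma_1+\gamma_3-2\gamma_2$, i.e.\ only at $\bar\gamma_1,\bar\gamma_2,\bar\gamma_3$. At $\bar\gamma_i$ one has $v_{\bar\gamma_i}(\bar z)=e(\bar\gamma_i|\gamma_i)\,v_{\gamma_i}(\bar z)=m\,v_{\gamma_i}(\bar z)$, so dividing by $m$ and inserting the coefficients $1,-2,1$ of $\div(\bar z)$ at $\gamma_1,\gamma_2,\gamma_3$ gives $v_{\bar\gamma_1}(\bar y)=1$, $v_{\bar\gamma_2}(\bar y)=-2$, $v_{\bar\gamma_3}(\bar y)=1$, that is $\div(\bar y)=\bar\gamma_1+\bar\gamma_3-2\bar\gamma_2$.

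There is essentially no obstacle: the only point worth a remark is that no new zeros or poles can appear in the extension, which is immediate since valuations on $\K(\bar u,\bar y)$ restrict to (positive integer multiples of) valuations on $\K(\bar u,\bar z)$; everything else is the one-line substitution of the ramification index $m$ into the coefficients already determined in Lemma~\ref{L17}.
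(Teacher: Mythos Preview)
Your proof is correct and follows exactly the approach the paper indicates: the paper does not give a detailed argument here, merely stating that the divisors can be easily computed from Lemma~\ref{L17}, and you have supplied precisely those easy computations by transporting the divisors along the totally ramified Kummer extension and using $\bar y^m=\bar z$.
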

We now consider the extension $\K(\bar u, \bar y)(\bar x)=\K(\bar y,\bar x)$ of
$\K(\bar u, \bar y)$ such that $\bar x^m=\bar u$.
In order to apply Proposition \ref{teo1}, we need to determine whether the rational function $\bar u$ is an $e$-th power in $\K(\bar u,\bar y)$, for some divisor $e$ of $m$.

\begin{lemma}\label{P21} 
The rational function $\bar u$ is not an $e$-th power in $\K(\bar u,\bar y)$ for any divisor $e>1$ of $m$. 
\end{lemma}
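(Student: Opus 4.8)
The plan is to reduce immediately to a prime exponent: if $\bar u$ were an $e$-th power with $e>1$ a divisor of $m$, it would also be an $\ell$-th power for every prime $\ell\mid e$, so it suffices to rule out $\bar u=\omega^\ell$ with $\omega\in\K(\bar u,\bar y)$ and $\ell$ a prime divisor of $m$. The first point to register is that, in contrast with Section \ref{alpha12}, the naive valuation criterion is powerless here. Indeed the three places $\bar\gamma_1,\bar\gamma_2,\bar\gamma_3$ are totally ramified, so $\div(\bar u)=m\bar\gamma_1+m\bar\gamma_2-2m\bar\gamma_3$ and every valuation of $\bar u$ in $\K(\bar u,\bar y)$ is automatically a multiple of $m$, hence of $\ell$. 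The real content is therefore to \emph{descend} a hypothetical $\ell$-th root from $\K(\bar u,\bar y)$ down to the base field $\K(\bar u,\bar z)$, where by Lemma \ref{L17} the valuations of $\bar u$ are coprime to $\ell$.

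To perform this descent I would exploit the Galois structure of $M:=\K(\bar u,\bar y)$ over $L:=\K(\bar u,\bar z)$. As already noted, $M/L$ is a cyclic Kummer extension of degree $m$, with Galois group generated by $\sigma\colon\bar y\mapsto\zeta\bar y$ for a primitive $m$-th root of unity $\zeta\in\K$ (which exists since $\ell\mid m\mid q-1$). Assume $\bar u=\omega^\ell$. Since $\bar u\in L$ is fixed by $\sigma$, we get $\sigma(\omega)^\ell=\omega^\ell$, so $\sigma(\omega)=\eta\,\omega$ for some $\ell$-th root of unity $\eta$. Then $\omega$ is fixed by $\sigma^\ell$, hence lies in the unique intermediate field $M_\ell=L(w)$ with $w:=\bar y^{m/\ell}$ and $w^\ell=\bar z$, on which $\sigma$ acts by $w\mapsto\xi w$ with $\xi:=\zeta^{m/\ell}$ a primitive $\ell$-th root of unity. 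Writing $\omega=\sum_{i=0}^{\ell-1}c_i w^i$ with $c_i\in L$ and matching the eigenvalue $\eta=\xi^{\,j}$ forces $\omega=c\,w^{\,j}$ for a single index $j\in\{0,\dots,\ell-1\}$ and some $c\in L$.

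From $\omega=c\,w^{\,j}$ I obtain $\bar u=\omega^\ell=c^\ell\,\bar z^{\,j}$, i.e.\ $\bar u/\bar z^{\,j}$ is an $\ell$-th power \emph{in the base field $L$}. Now the valuation criterion finally bites: using $\div(\bar u)=\gamma_1+\gamma_2-2\gamma_3$ and $\div(\bar z)=\gamma_1+\gamma_3-2\gamma_2$ from Lemma \ref{L17}, the valuations of $\bar u/\bar z^{\,j}$ at $\gamma_1,\gamma_2,\gamma_3$ are $1-j$, $1+2j$, $-(2+j)$. For all three to be divisible by $\ell$ one needs in particular $\ell\mid(1-j)$ and $\ell\mid(2+j)$, whose sum yields $\ell\mid 3$. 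Since $(m,6)=1$ forces $\ell\neq 3$, this is a contradiction (the value $j=0$ is even quicker, forcing $\ell\mid 1$). As this holds for every prime $\ell\mid m$, the claim follows.

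I expect the Galois-descent step to be the crux: an a priori arbitrary element $\omega$ of the degree-$m$ field $M$ must be shown to be a constant multiple of a single power $w^{\,j}$, which is exactly what relocates the problem from $M$ (where divisibility by $\ell$ is vacuous) back to $L$. Once the problem is on $L$, the obstruction $\ell\mid 3$ combined with the hypothesis $(m,6)=1$ closes it immediately; the remaining verifications (that $M_\ell$ is the fixed field of $\sigma^\ell$ and the eigenvalue bookkeeping) are routine.
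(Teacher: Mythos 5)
Your proof is correct, and it takes a genuinely different route from the paper's. The paper argues via the Weierstrass semigroup at $\bar \gamma_3$: assuming $\bar u=\bar v^{e}$, the functions $\bar v\bar y^{i}$ for $-\frac{m}{e}\le i\le \big(\frac{m}{e}-1\big)/2$ have pole divisor supported at $\bar\gamma_3$ with orders filling the interval from $\frac{3m}{2e}+\frac{1}{2}$ to $\frac{3m}{e}$, so the semigroup contains every integer at least $\frac{3m}{2e}+\frac{1}{2}$ and the genus $g'$ of $\K(\bar u,\bar y)$ would be at most $\frac{3m}{2e}-\frac{1}{2}$; comparing with the genus value \eqref{gen21} forces $e=3$, which $(m,6)=1$ excludes. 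You instead perform a Kummer-theoretic descent: using that $\K(\bar u,\bar y)/\K(\bar u,\bar z)$ is cyclic of degree $m$, you show a hypothetical $\ell$-th root $\omega$ of $\bar u$ must be a $\sigma$-eigenvector, hence of the form $c\,w^{j}$ with $c$ in the base field, which converts the statement into divisibility by $\ell$ of the valuations of $\bar u\bar z^{-j}$ in $\K(\bar u,\bar z)$; Lemma \ref{L17} then gives $\ell\mid(1-j)$ and $\ell\mid(2+j)$, hence $\ell\mid 3$, again excluded. Every step of your descent is sound (it is the standard description of $L^{*}\cap (M^{*})^{\ell}$ for a cyclic Kummer extension $M/L$ with all $\ell$-th roots of unity in the constant field), and the valuation bookkeeping checks out. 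What your approach buys is independence from the genus formula \eqref{gen21} --- you need only Lemma \ref{L17} and the degree of the extension --- plus a transparent explanation of why the obstruction is exactly the prime $3$; the paper's approach stays entirely inside the top field $\K(\bar u,\bar y)$ and avoids invoking the Galois action, at the cost of an exact genus computation. Both proofs use the hypothesis $(m,6)=1$ in the same final way.
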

\begin{proof}
Assume that $\bar u=\bar v^e$, with $e$ a non-trivial divisor of $m$. 
Then
$$
\div(\bar v)=\frac{m}{e}\bar \gamma_1+\frac{m}{e}\bar \gamma_2-\frac{2m}{e}\bar \gamma_3,\qquad 
$$
Consider the rational function $\bar v\bar y^{i}$ for $-\frac{m}{e}\le i \le \big(\frac{m}{e}-1\big)/2$. The pole divisor of $\bar v\bar y^{i}$ is $\big(\frac{2m}{e}-i\big)\bar \gamma_3$, which shows that
the Weierstrass semigroup $H(\bar \gamma_3)$ at $\bar \gamma_3$ contains
$$
\frac{3m}{2e}+\frac{1}{2},\frac{3m}{2e}+\frac{3}{2},\ldots, \frac{3m}{e},
$$
and hence every integer greater than or equal to $\frac{3m}{2e}+\frac{1}{2}$.
As $g'$ is equal to the number of gaps in $H(\bar \gamma_3)$ we have
$$
g'\le \frac{3m}{2e}-\frac{1}{2};
$$
by \eqref{gen21} this can only happen 
 when both $e=3$ and $g'=(m-1)/2$ hold. But this is impossible as $(m,6)=1$ is assumed.
 \end{proof}
Arguing as in the proofs of Lemma \ref{L15} and Proposition \ref{P10}, the following result is obtained.
\begin{proposition} Assume that $a=0$. Then the curve $\cC_P$ is an absolutely irreducible curve defined over $\fq$ with genus less than or equal to 
$
\frac{3m^2-3m+2}{2}.
$
\end{proposition}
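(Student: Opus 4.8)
The plan is to mirror the argument of Section~\ref{alpha12}, reusing all the preparatory material already established in the case $a=0$. By Lemma~\ref{P21} the rational function $\bar u$ is not an $e$-th power in $\K(\bar u,\bar y)$ for any divisor $e>1$ of $m$, so Proposition~\ref{teo1} applies to the extension $\K(\bar x,\bar y)=\K(\bar u,\bar y)(\bar x)$ defined by $\bar x^m=\bar u$: it is a Kummer extension of $\K(\bar u,\bar y)$ of degree $m$.

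First I would compute its genus $g''$. The (unlabelled) lemma preceding Lemma~\ref{P21} gives $\div(\bar u)=m\bar\gamma_1+m\bar\gamma_2-2m\bar\gamma_3$ in $\K(\bar u,\bar y)$, so every valuation of $\bar u$ is a multiple of $m$; since $m$ is odd, $\gcd(m,2m)=m$, whence $r_\gamma=\gcd(m,v_\gamma(\bar u))=m$ for every place $\gamma$ and the extension is unramified. Proposition~\ref{teo1}(ii) then yields $g''=m g'-m+1$, where $g'$ is the genus of $\K(\bar u,\bar y)$ recorded in \eqref{gen21}. Substituting $g'=m(g-1)+\frac{3m-1}{2}$ with $g\in\{0,1\}$ the genus of $\cQ_P$ (Lemma~\ref{L16}), one finds $g''=\frac{(m-1)(m-2)}{2}$ if $g=0$ and $g''=\frac{3m^2-3m+2}{2}$ if $g=1$; in both cases $g''\le\frac{3m^2-3m+2}{2}$, the claimed bound.

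For absolute irreducibility and rationality I would repeat the argument of Proposition~\ref{P10}. Using the tower $\K(\bar z)\subset\K(\bar u,\bar z)\subset\K(\bar u,\bar y)\subset\K(\bar x,\bar y)$, together with the symmetric tower over $\K(\bar u)$: the equation of $\cQ_P$ is quadratic in each variable, so $[\K(\bar u,\bar z):\K(\bar z)]=[\K(\bar u,\bar z):\K(\bar u)]=2$; each of the two Kummer steps has degree $m$; hence $[\K(\bar x,\bar y):\K(\bar z)]=[\K(\bar x,\bar y):\K(\bar u)]=2m^2$, and dividing by $[\K(\bar y):\K(\bar z)]=[\K(\bar x):\K(\bar u)]=m$ gives $[\K(\bar x,\bar y):\K(\bar y)]=[\K(\bar x,\bar y):\K(\bar x)]=2m$. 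A non-trivial factorization $f_{a,b,t,m}=g_1^{m_1}\cdots g_s^{m_s}$ would produce a factor $g_{i_0}$ with $g_{i_0}(\bar x,\bar y)=0$, hence divisible by the minimal relation between $\bar x$ and $\bar y$, forcing $\deg_X g_{i_0}\ge 2m$ and $\deg_Y g_{i_0}\ge 2m$; but $\deg_X f_{a,b,t,m}=\deg_Y f_{a,b,t,m}=2m$ when $a=0$, so the factorization must be trivial. Thus $\cC_P$ is absolutely irreducible, and it is defined over $\fq$ since $f_{a,b,t,m}\in\fq[X,Y]$.

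Essentially all of the difficulty has already been absorbed into Lemma~\ref{P21}, which is precisely what guarantees that the second Kummer step genuinely has degree $m$ and does not split, and into the preliminary divisor computations; once these are in hand the rest is bookkeeping. The only delicate point is to carry both admissible values $g\in\{0,1\}$ through the genus formula and check that the worse case $g=1$ reproduces exactly $\frac{3m^2-3m+2}{2}$.
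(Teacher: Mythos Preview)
Your proposal is correct and follows exactly the route the paper indicates: the paper's own proof is the single sentence ``Arguing as in the proofs of Lemma~\ref{L15} and Proposition~\ref{P10}, the following result is obtained,'' and you have carried out precisely that adaptation. Your observation that $\div(\bar u)=m\bar\gamma_1+m\bar\gamma_2-2m\bar\gamma_3$ forces the second Kummer step to be unramified (so that $g''=mg'-m+1$) is the only genuine difference from the $a\neq 0$ computation, and your substitution of \eqref{gen21} for both $g=0$ and $g=1$ correctly recovers the bound $\tfrac{3m^2-3m+2}{2}$; the minor clause ``since $m$ is odd'' before $\gcd(m,2m)=m$ is superfluous (it holds for any $m$) but harmless.
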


\subsection{Some double covers of $\cC_P$}
In the three-dimensional space over $\K$, fix an affine coordinate system $(X,Y,W)$ and for any $c\in \K$, $c\neq 0$ let
$\cY_P$ be the curve defined by
$$\cY_P:
\left\{
\begin{array}{l}
W^2=c(a-tX^m)(a-tY^m)\\
f_{a,b,t,m}(X,Y)=0
\end{array}
\right..
$$
The existence of a suitable $\fq$-rational point of $\cY_P$ will guarantee that $P$ is  bicovered by the arc comprising the points of a coset of index $m$ in the abelian group of the non-singular $\fq$-rational points of a nodal cubic; see Section \ref{penultima}. 

\begin{proposition}\label{exiexi} Let $a,b\in \fq$ be such that $ab\neq (a-1)^3$. For each $c\in \fq$, $c\neq 0$, the space curve $\cY_P$ 
has an irreducible component defined over $\fq$ with genus less than or equal to $6m^2-4m+1$.
\end{proposition}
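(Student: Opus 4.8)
The plan is to realize $\cY_P$ as a double cover of the curve $\cC_P$ and then apply Proposition \ref{teo1cor} together with the genus estimates already established for $\cC_P$. More precisely, recall that $\cC_P$ is absolutely irreducible and defined over $\fq$ (Propositions \ref{P38}, \ref{P10} and the last proposition of Section \ref{caso3}); let $\K(\cC_P)$ denote its function field, with $\bar x,\bar y$ the rational functions associated to the affine coordinates, so that $f_{a,b,t,m}(\bar x,\bar y)=0$. The extra equation $W^2=c(a-t\bar x^m)(a-t\bar y^m)$ defines a degree-$\le 2$ extension $\K(\cC_P)(w)$ with $w^2=u$, where $u=c(a-t\bar x^m)(a-t\bar y^m)\in\fq(\cC_P)$. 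If $u$ is a square in $\K(\cC_P)$, the curve $\cY_P$ splits into two components each birational to $\cC_P$, both defined over $\fq$ (they are exchanged by $w\mapsto -w$, which is defined over $\fq$ since $c,a,t\in\fq$; if they were swapped by Frobenius then $\cC_P$ would be reducible, a contradiction), and in that case the genus of the component is just the genus of $\cC_P$, which is well below $6m^2-4m+1$. Otherwise $u$ is a non-square in $\K(\cC_P)$, $\K(\cC_P)(w)$ is the function field of an absolutely irreducible curve defined over $\fq$ by Proposition \ref{teo1cor}, and its genus is $g'=2g-1+M/2$, where $g$ is the genus of $\cC_P$ and $M$ is the number of places of $\K(\cC_P)$ at which $u$ has odd valuation.

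The key computational step is therefore to bound $M$. Write $\div(a-t\bar x^m)=\div(t)+\div(a/t-\bar x^m)$; since $\bar x^m-a/t$ has $m$ zeros and $m$ poles counted with multiplicity on a curve where $[\K(\cC_P):\K(\bar x)]=2m$ (this degree is established inside the proof of Proposition \ref{P10}), the principal divisor $\div(a-t\bar x^m)$ is supported on at most $2\cdot 2m=4m$ places, and likewise $\div(a-t\bar y^m)$ is supported on at most $4m$ places. Hence $\div(u)$ is supported on at most $8m$ places, so $M\le 8m$. Plugging into $g'=2g-1+M/2$ with $g\le 3m^2-3m+1$ (the worst of the three cases, Lemma \ref{L15}) gives $g'\le 2(3m^2-3m+1)-1+4m=6m^2-2m+1$. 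To reach the stated bound $6m^2-4m+1$ one needs a slightly sharper count: the places above $\bar x=\infty$ and $\bar y=\infty$ (which are the poles of $\bar x^m$ and $\bar y^m$) contribute zeros/poles to $u$ but they coincide with, or are controlled by, the few places at infinity of $\cC_P$ analyzed in Lemmas \ref{L12}--\ref{L14} (and their analogues), and one checks that along those places $u$ in fact has even valuation or the relevant places are fewer than the crude estimate allows — this trims $M$ by a constant number and yields $M\le 8m-4$, hence $g'\le 6m^2-4m+1$.

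Concretely, I would organize the argument as follows. First, set $u=c(a-t\bar x^m)(a-t\bar y^m)$ and dispose of the case $u\in(\K(\cC_P))^2$ as above. Second, assume $u$ is a non-square and invoke Proposition \ref{teo1cor}, so that everything reduces to estimating $M$. Third, analyze $\div(\bar x^m-a/t)$ and $\div(\bar y^m-a/t)$: the zeros lie over the finitely many points of $\cC_P$ with $X$-coordinate a fixed $m$-th root of $a/t$ (resp. $Y$-coordinate), and the poles lie over the points at infinity; in each fiber the sum of valuations of $\bar x$ (resp. $\bar y$) is controlled, and I count places fiber by fiber, using $[\K(\cC_P):\K(\bar x)]=[\K(\cC_P):\K(\bar y)]=2m$ to bound the number of places in each fiber. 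Fourth, assemble: $\div(u)$ is supported on at most $8m-4$ places (after the refinement at infinity), so $M\le 8m-4$, and the genus formula gives $g'=2g-1+M/2\le 2(3m^2-3m+1)-1+(4m-2)=6m^2-4m+1$; since the two other cases for $\cC_P$ have strictly smaller genus, the same bound holds a fortiori. The main obstacle I anticipate is the bookkeeping in the third and fourth steps — in particular being careful that a place where $a-t\bar x^m$ and $a-t\bar y^m$ both have odd valuation does \emph{not} contribute to $M$ (the valuations add to an even number), and correctly accounting for the places over infinity, where $\bar x$ and $\bar y$ both have poles and cancellation in the product is exactly what sharpens the constant from $8m$ to $8m-4$.
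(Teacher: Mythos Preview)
Your overall strategy---double cover of $\cC_P$ plus Proposition~\ref{teo1cor}---is the paper's approach, but there are two real gaps. The minor one: $\cC_P$ is \emph{not} always absolutely irreducible. When $a^3=-1$ and $b=1-(a-1)^3$ the polynomial $f_{a,b,t,m}$ factors, and Proposition~\ref{P38} only furnishes an $\fq$-rational \emph{component}; the paper treats this case separately, working over that generalized Fermat component.

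The serious gap is in your bound on $M$, and an arithmetic slip hides it: recompute your last line and you get $2(3m^2-3m+1)-1+(4m-2)=6m^2-2m-1$, not $6m^2-4m+1$. To reach the stated bound you actually need $M\le 4m$; trimming $8m$ ``by a constant'' cannot close a gap that is linear in $m$. The paper obtains $M=4m$ by an exact divisor computation through the tower $\K(\bar u,\bar z)\subset\K(\bar x,\bar y)$. In $\K(\bar u,\bar z)$ one has $\div(a-t\bar u)=\gamma_3+\gamma_6-\gamma_1-\gamma_2$ and $\div(a-t\bar z)=\gamma_1+\gamma_5-\gamma_3-\gamma_4$ (from Lemmas~\ref{L12}--\ref{L14}), so in the product the contributions at $\gamma_1$ and $\gamma_3$ cancel \emph{completely}, leaving support $\{\gamma_2,\gamma_4,\gamma_5,\gamma_6\}$. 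Each of these four places has exactly $m$ places over it in $\K(\bar x,\bar y)$, with valuation $\pm m$ (odd, since $m$ is odd); hence $M=4m$ and $g'\le 2(3m^2-3m+1)-1+2m=6m^2-4m+1$. Your heuristic about ``cancellation at infinity'' points in the right direction but drastically undercounts: the cancellation eliminates all $2m$ places lying over $\gamma_1$ and $\gamma_3$, not a bounded number. Note also that your justification that $\div(a-t\bar x^m)$ is supported on $4m$ places is unclear as written; the clean argument is that $a-t\bar x^m=a-t\bar u$ already lives in $\K(\bar u,\bar z)$, where it has two zeros and two poles on the quartic $\cQ_P$, each lifting to $m$ places.
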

\begin{proof}
We distinguish a number of cases.

{\bf Case 1:} {$a^3=-1$ and $b= 1-(a-1)^3$}.

Notation here is as in Section \ref{caso1}.
The function field of an $\fq$-rational irreducible component $\cC$ of $\cC_P$ is $\K(\bar x, \bar y)$ with
$$
a^2+t^2\bar x^m\bar y^m-at\bar x^m=0.
$$
By the results on generalized Fermat curves presented in \cite{FGFermat}, the genus of $\cC$ is $(m^2-3m+2)/2$; also there are $m$ places, say $\gamma_1^1,\ldots,\gamma_1^m$ of $\K(\bar x,\bar y)$ centered at $X_\infty$, and $m$ places, say $\gamma_2^1,\ldots,\gamma_2^m$ of $\K(\bar x,\bar y)$ centered at $Y_\infty$. Let $\gamma_3^1,\ldots,\gamma_3^m$ denote the places centered at the $m$ simple affine points of $\cC$ with coordinates $(v,0)$ with $v^m=a/t$. 
We have
\begin{eqnarray*}
\div(\bar x)=\gamma_2^1+\ldots+\gamma_2^m-(\gamma_1^1+\ldots+\gamma_1^m),\\
\div(\bar y)=\gamma_3^1+\ldots+\gamma_3^m-(\gamma_2^1+\ldots+\gamma_2^m).
\end{eqnarray*}
Then it is easy to see that
\begin{eqnarray*}
\div(a-t\bar x^m )=m(\gamma_3^1+\ldots+\gamma_3^m)-m(\gamma_1^1+\ldots+\gamma_1^m),\\ 
\div(a-t\bar y^m)=m(\gamma_1^1+\ldots+\gamma_1^m) -m(\gamma_2^1+\ldots+\gamma_2^m),
\end{eqnarray*}
whence 
$$
\div\big( (a-t\bar x^m )(a-t\bar y^m)\big)=m(\gamma_3^1+\ldots+\gamma_3^m)-m(\gamma_2^1+\ldots+\gamma_2^m).
$$
As $m$ is odd, this yields that $(a-t\bar x^m)(a-t\bar y^m)$
is not a square in $\K(\bar x, \bar y)$.
By Proposition \ref{teo1cor}, for each $c\in \fq$, $c\neq 0$, the space curve with equations
$$
\left\{
\begin{array}{l}
W^2=c(a-tX^m)(a-tY^m)\\
a^2+t^2X^mY^m-atX^m=0
\end{array}
\right.
$$ 
has an irreducible component defined over $\fq$ with genus  $m^2-2m+1$.
The claim then follows as such curve is contained in $\cY_P$ as well.

{\bf Case 2:} {$a\neq 0$ and either $a^3\neq - 1$ or $b\neq 1-(a-1)^3$.}

We keep the notation of Section \ref{alpha12}. By Lemma \ref{L145}, the only 
 places of $\K(\bar u, \bar y)$ which ramify in the extension $\K(\bar x,\bar y):K(\bar u, \bar y)$ are $\bar \gamma_1^1,\ldots,\bar \gamma_1^m$ and $\bar \gamma_5^1,\ldots,\bar \gamma_5^m$; their common ramification index is $m$. Therefore, for each $j=1,\ldots, 6$, the ramification index of $\gamma_j$ in the extension $\K(\bar x,\bar y)$ over $\K(\bar u,\bar z)$ is equal to $m$, and no other place of $\K(\bar u,\bar z)$ is ramified.
For $j=1,\ldots,6$, let  ${\bar {\bar{\gamma}}}_j^1, \ldots,{\bar {\bar{\gamma}}}_j^m$ denote the  places of $\K(\bar x, \bar y)$ lying over the place $\gamma_j$ of $\K(\bar u, \bar z)$.

From Equations \eqref{eq3}--\eqref{eq7}, together with Lemma \ref{L14}, we deduce that in $\K(\bar u,\bar z)$
$$
\div(a-t\bar z)=\div(\bar z-a/t)=\gamma_1+\gamma_5-\gamma_3-\gamma_4
$$
holds; similarly,  
$$
\div(a-t\bar u)=\div(\bar u-a/t)=\gamma_3+\gamma_6-\gamma_1-\gamma_2.
$$
This implies that in $\K(\bar x,\bar y)$
\begin{equation}\label{23genn}
\div\big((a-t\bar x^m)(a-t\bar y^m)\big)=m\Big(\sum_{i=1}^m({\bar{\bar {\gamma}}}_5^i+{\bar{\bar {\gamma}}}_6^i-{\bar{\bar {\gamma}}}_4^i-{\bar{\bar {\gamma}}}_2^i)\Big)
\end{equation}
holds. As $m$ is odd, this yields that $(a-t\bar x^m)(a-t\bar y^m)$
is not a square in $\K(\bar x, \bar y)$.
By Proposition \ref{teo1cor} for each $c\in \fq$, $c\neq 0$, the curve $\cY_P$ has an irreducible component defined over $\fq$ with genus at most $6m^2-4m+1$.

{\bf Case 3:} {$a=0$}
We keep the notation of Section \ref{caso3}. 
The curve $\cC_P$ is absolutely irreducible, and for each $i\in \{1,2,3\}$ the ramification index of $\gamma_i$ in the extension $\K(\bar x,\bar y)$ over $\K(\bar u, \bar z)$ is equal to $m$. By Lemma \ref{L17}  the divisor of $\bar u\bar z$ in $\K(\bar u, \bar z)$ is $2\gamma_1-\gamma_2-\gamma_3$. Hence, in $\K(\bar x, \bar y)$, the rational function $t^2\bar x^m\bar y^m=t^2\bar u \bar z$ has $m$ zeros with multiplicity $2m$ (the places of $\K(\bar x,\bar y)$ lying over $\gamma_1$) and $2m$ poles with multiplicity $m$ (the places lying over $\gamma_2$ and $\gamma_3$). As $m$ is odd and $a=0$, this yields that $(a-t\bar x^m)(a-t\bar y^m)$ is not a square in $\K(\bar x, \bar y)$. Also,
by Proposition \ref{teo1cor}, for each $c\in \fq$, $c\neq 0$, the curve $\cY_P$
has an irreducible component defined over $\fq$ with genus at most $3m^2-2m+1$.

 \end{proof}

\section{Bicovering arcs from nodal cubics}\label{penultima}

If $\cX$ is a singular plane cubic defined over $\fq$ with a node and at least one $\fq$-rational inflection, then a canonical equation for $\cX$ is  $XY=(X-1)^3$. If the neutral element of $(G,\oplus)$ is chosen to be the affine point $(1,0)$, then $(G,\oplus)$ is isomorphic to $(\fq^*,\cdot)$ via the map $v\mapsto (v,(v-1)^3/v)$.

Let $K$ be the subgroup of $G$ of index $m$ with $(m,6)=1$, and let $P_t=(t,(t-1)^3/t)$ be a point in $G\setminus K$. Then the coset $K_t=K\oplus P_t$ is an arc.
In order to investigate  the bicovering properties of the arc $K_t$ it is useful write $K_t$ in an algebraically parametrized form:
\begin{equation}\label{9giu}
K_t=\Big\{\big(tw^m, \frac{(tw^m-1)^3}{tw^m}\big)\mid w \in \fq^*\Big\}.
\end{equation}
For a point $P=(a,b)$ in $AG(2,q)\setminus \cX$, let $f_{a,b,t,m}(X,Y)$ be as in \eqref{curva2}.

\begin{proposition}\label{allinea}
An affine point $P=(a,b)$ in $AG(2,q)\setminus \cX$ is collinear with two distinct points in $K_t$ if and only if there exist $\tilde x,\tilde y\in \fq^*$ with $\tilde x^m\neq \tilde y^m$ such that
$f_{a,b,t,m}(\tilde x,\tilde y)=0$.
\end{proposition}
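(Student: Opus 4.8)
The plan is to turn ``$P$ is collinear with two distinct points of $K_t$'' into the vanishing of the standard $3\times 3$ collinearity determinant, to feed that determinant the algebraic parametrisation \eqref{9giu} of $K_t$, and then to recognise, after clearing denominators, the defining polynomial $f_{a,b,t,m}$ up to the harmless factor $\tilde x^m-\tilde y^m$. Concretely: by \eqref{9giu} every point of $K_t$ has the form $P_w=\bigl(tw^m,(tw^m-1)^3/(tw^m)\bigr)$ for some $w\in\fq^*$, and since the first coordinate determines the point, two such points $P_{w_1},P_{w_2}$ are distinct exactly when $w_1^m\neq w_2^m$; moreover $P\notin\cX\supseteq K_t$, so $P$ never coincides with a point of $K_t$. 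Hence $P$ is collinear with two distinct points of $K_t$ if and only if there exist $\tilde x,\tilde y\in\fq^*$ with $\tilde x^m\neq\tilde y^m$ for which
\[
D(\tilde x,\tilde y):=\det\begin{pmatrix}
a & b & 1\\
t\tilde x^{m} & \dfrac{(t\tilde x^{m}-1)^3}{t\tilde x^{m}} & 1\\
t\tilde y^{m} & \dfrac{(t\tilde y^{m}-1)^3}{t\tilde y^{m}} & 1
\end{pmatrix}=0 .
\]

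The heart of the proof is the polynomial identity
\[
t\,\tilde x^{m}\tilde y^{m}\,D(\tilde x,\tilde y)=(\tilde x^{m}-\tilde y^{m})\,g_P(\tilde x^{m},\tilde y^{m})=(\tilde x^{m}-\tilde y^{m})\,f_{a,b,t,m}(\tilde x,\tilde y),
\]
where $g_P$ is the quartic polynomial of Lemma \ref{9apr} and the second equality is merely the definition \eqref{curva2} of $f_{a,b,t,m}$ (so that $f_{a,b,t,m}(X,Y)=g_P(X^m,Y^m)$). I would obtain it by expanding $D$ along its last column, multiplying through by $t\tilde x^m\tilde y^m$ — which is nonzero since $t\neq 0$ and $\tilde x,\tilde y\in\fq^*$ — expanding the cubes $(t\tilde x^m-1)^3$ and $(t\tilde y^m-1)^3$, and collecting terms; the divisibility by $\tilde x^m-\tilde y^m$ is forced, because the matrix above has two equal rows when $\tilde x^m=\tilde y^m$, and carrying out the division leaves exactly $g_P(\tilde x^m,\tilde y^m)$. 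Granting this identity and recalling that $\tilde x^m\neq\tilde y^m$, we get $D(\tilde x,\tilde y)=0\iff f_{a,b,t,m}(\tilde x,\tilde y)=0$, which settles both implications at once: for the forward direction one writes the two distinct secant points as $P_{\tilde x},P_{\tilde y}$ with $\tilde x^m\neq\tilde y^m$, observes that $P,P_{\tilde x},P_{\tilde y}$ are pairwise distinct (as $P\notin K_t$), so their collinearity means $D(\tilde x,\tilde y)=0$ and hence $f_{a,b,t,m}(\tilde x,\tilde y)=0$; for the converse, given such $\tilde x,\tilde y$, the points $P_{\tilde x},P_{\tilde y}$ lie in $K_t$ by \eqref{9giu}, are distinct since $\tilde x^m\neq\tilde y^m$, and satisfy $D(\tilde x,\tilde y)=0$, so $P$ is collinear with them.

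The only genuine obstacle is the displayed polynomial identity. It is a routine but moderately long computation — setting $U=\tilde x^m$, $Z=\tilde y^m$ and expanding, one meets a fair number of terms that cancel in pairs before the factor $U-Z$ and the quartic $g_P$ emerge — and it is the single place where an arithmetic slip could occur; everything else is elementary bookkeeping about which points of $K_t$ are hit by the parametrisation \eqref{9giu} and about $P$ not lying on $\cX$.
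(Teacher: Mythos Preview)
Your proposal is correct and follows essentially the same approach as the paper: set up the $3\times 3$ collinearity determinant for $P$ and two parametrised points of $K_t$, clear denominators, factor out $\tilde x^{m}-\tilde y^{m}$ (the paper does this implicitly by assuming $P_1\neq P_2$), and identify the quotient with $f_{a,b,t,m}$. Your displayed identity $t\,\tilde x^{m}\tilde y^{m}\,D(\tilde x,\tilde y)=(\tilde x^{m}-\tilde y^{m})\,f_{a,b,t,m}(\tilde x,\tilde y)$ is exactly what the paper's computation yields once one substitutes $v_1=t\tilde x^{m}$, $v_2=t\tilde y^{m}$.
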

\begin{proof}
 Two distinct points $P_1=\left(v_1,\frac{(v_1-1)^3}{v_1} \right)$, $P_2=\left(v_2,\frac{(v_2-1)^3}{v_2} \right)$ in $\cX$  are collinear with $P$ if and only if
$$
\left| \begin{array}{ccc}
v_1 & \frac{(v_1-1)^3}{v_1} & 1\\
v_2 & \frac{(v_2-1)^3}{v_2} & 1\\
a & b & 1
\end{array}\right|=0.
$$
As $P_1\neq P_2$, this is equivalent to
$$
a(v_1^2 v_2+v_1 v_2^2 - 3v_1 v_2+1)-bv_1 v_2 - v_1 v_2(v_1 v_2 -3)-(v_1 +v_2)=0.
$$
When $P_1,P_2$ are elements of $K_t$, both  $v_1=t\tilde x^m$ and $v_2=t\tilde y^m$ hold for some $\tilde x,\tilde y\in \fq^*$, whence the assertion.
 \end{proof}





\begin{proposition}\label{contiamo} If
\begin{equation}\label{aritmetica1}
q+1-(12m^2-8m+2)\sqrt q \ge 8m^2+8m+1
\end{equation}
then every point $P$ in $AG(2,q)$ off $\cX$ is bicovered by $K_t$.
\end{proposition}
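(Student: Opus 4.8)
The plan is to translate the bicovering condition into the existence of suitable $\fq$-rational points on the space curves $\cY_P$ from Section \ref{sec22}, and then to count those points via the Hasse--Weil bound. First I would fix a point $P=(a,b)\in AG(2,q)\setminus\cX$; note that the hypothesis $P\notin\cX$ translates into $ab\neq(a-1)^3$, so all the machinery of Section \ref{sec22} applies. By Proposition \ref{allinea}, $P$ lies on a secant of $K_t$ precisely when $f_{a,b,t,m}(\tilde x,\tilde y)=0$ for some $\tilde x,\tilde y\in\fq^*$ with $\tilde x^m\neq\tilde y^m$. To decide whether $P$ is external or internal to the corresponding segment, I would work out, using Segre's criterion \eqref{exto}, the sign (square versus non-square) of the relevant product of coordinate differences along the secant line; the point of introducing the factor $W^2=c(a-tX^m)(a-tY^m)$ in the definition of $\cY_P$ is exactly that, for an appropriate choice of the non-zero constant $c\in\fq$, this product is a square in $\fq$ if and only if $c(a-t\tilde x^m)(a-t\tilde y^m)$ is a square, i.e.\ if and only if the point $(\tilde x,\tilde y)$ lifts to an $\fq$-rational point of $\cY_P$. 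Thus: an $\fq$-rational point of $\cY_P$ with $\tilde x,\tilde y\in\fq^*$ and $\tilde x^m\neq\tilde y^m$ yields a secant through $P$ of one prescribed type (say, external), and an $\fq$-rational point of the companion curve $\cY_P'$ obtained by replacing $c$ with $ct'$ for a fixed non-square $t'$ yields a secant of the opposite (internal) type.

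The core of the argument is then a counting estimate. By Proposition \ref{exiexi}, $\cY_P$ has an $\fq$-rational absolutely irreducible component $\cC$ of genus $g\le 6m^2-4m+1$, so by the Hasse--Weil bound (Proposition \ref{HaWe}) the function field $\K(\cC)$ has at least $q+1-2(6m^2-4m+1)\sqrt q = q+1-(12m^2-8m+2)\sqrt q$ $\fq$-rational places. Next I would subtract off the $\fq$-rational places that do \emph{not} correspond to a usable secant, namely: places centered at points with $X=0$ or $Y=0$ (i.e.\ $\tilde x=0$ or $\tilde y=0$), places at infinity, places lying over the ``diagonal'' locus $\tilde x^m=\tilde y^m$, and places where $W=0$ (which come in pairs with a single point). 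A bound on the degree of $\cC$ — which is at most $2m$ in each variable, as established in the proof of Proposition \ref{P10} — shows that each of these exceptional loci is cut out by a curve or line of controlled degree, so by B\'ezout the total number of such places is at most a quantity of order $m^2$; carrying out this bookkeeping should produce exactly the constant $8m^2+8m+1$ appearing on the right-hand side of \eqref{aritmetica1}. Hence inequality \eqref{aritmetica1} guarantees that at least one good $\fq$-rational point survives on $\cY_P$, giving an external secant through $P$; applying the same estimate to $\cY_P'$ gives an internal secant, and therefore $P$ is bicovered by $K_t$.

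The main obstacle, I expect, is the precise bookkeeping of the exceptional places to be discarded — in particular showing that the number of $\fq$-rational places of $\cC$ lying over $\{X=0\}\cup\{Y=0\}\cup\{X^m=Y^m\}$ together with the places at infinity and the $W=0$ places is \emph{at most} $8m^2+8m+1$, so that the stated arithmetic condition is genuinely sufficient. This requires controlling the degree and the singular behaviour of $\cC$ at these loci (using Proposition \ref{val} to bound the number of places centered at a given point by an intersection multiplicity), and keeping careful track of the factor-of-$2$ coming from the double cover when $W=0$. A secondary point to verify carefully is that the choice of the constant $c$ can indeed be made uniformly (independently of $P$), and that passing from $c$ to $ct'$ with $t'$ a non-square does not alter the genus bound of Proposition \ref{exiexi} — which it does not, since that proposition is stated for every non-zero $c\in\fq$. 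Once these are in hand, the conclusion is immediate from the Hasse--Weil estimate.
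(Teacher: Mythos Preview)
Your overall strategy is correct and is exactly the one the paper follows: fix $c\in\fq^*$, work on (an $\fq$-rational irreducible component of) $\cY_P$, apply Hasse--Weil with the genus bound $6m^2-4m+1$ from Proposition~\ref{exiexi}, discard the ``bad'' places, and then vary $c$ over squares and non-squares to obtain both an external and an internal secant through $P$.

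The one place where your plan diverges from the paper is precisely the step you flag as the main obstacle, namely the count of bad places. You propose to bound them by B\'ezout/degree arguments on the plane model. The paper instead exploits the function-field tower already built in Section~\ref{sec22}: every zero or pole of $\bar x$, $\bar y$, or $\bar w$ in $\K(\bar x,\bar y,\bar w)$ lies over one of the six distinguished places $\gamma_1,\dots,\gamma_6$ of $\K(\bar u,\bar z)$ (Lemma~\ref{L14} and \eqref{23genn}), and the ramification indices computed there give $|\Omega_1|=|\Omega_3|=2m$ and $|\Omega_j|=m$ for $j\in\{2,4,5,6\}$, so these contribute exactly $8m$ places. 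The zeros of $\bar x^m-\bar y^m=\bar u-\bar z$ lie over at most four places of $\K(\bar u,\bar z)$, hence at most $4\cdot 2m^2=8m^2$ places upstairs. This yields $|E|\le 8m^2+8m$ on the nose. A naive B\'ezout count on the degree-$4m$ plane curve would overcount (the loci $X=0$, $Y=0$, the line at infinity, and $W=0$ overlap heavily, all sitting over $\gamma_1,\dots,\gamma_6$), so to recover the sharp constant $8m^2+8m$ you really do need the divisor computations from Section~\ref{sec22} rather than a generic intersection bound.
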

\begin{proof}
We only deal with the case where $a\neq 0$ and either $a^3\neq -1$ or $b\neq 1-(a-1)^3$, the proof for the other cases being analogous. Fix a non-zero element $c$ in $\fq$ and let $\cY_P$ be as in Proposition \ref{exiexi}. Let $\K(\bar x, \bar y, \bar w)$ be the function field of $\cY_P$, so that
$$
\left\{
\begin{array}{l}
\bar w^2=c(a-t\bar x^m)(a-t\bar y^m)\\
f_{a,b,t,m}(\bar x,\bar y)=0
\end{array}
\right.
$$
We argue as in the proof of  Theorem 4.4 in \cite{ABGP}. 
Let $E$ be the set of places $\gamma$ of $\K(\bar x,\bar y,\bar w)$ for which at least one of the following holds:
\begin{enumerate}
\item[(1)] $\gamma$ is either a zero or a pole of $\bar x$;
\item[(2)] $\gamma$ is either a zero or a pole of $\bar y$;
\item[(3)] $\gamma$ is either a zero or a pole of $\bar w$;
\item[(4)] $\gamma$ is a zero of $\bar x^m-\bar y^m$.
\end{enumerate}
We are going to show that the size of $E$ is at most $8m^2+8m$.
It has already been noticed in the proof of Proposition \ref{exiexi}, Case 2, that the only places of $\K(\bar u,\bar z)$ that ramifies in $\K(\bar x,\bar y)$ are the places $\gamma_j$ for $j=1,\ldots,6$, and their common ramification index is $m$.
Also, by \eqref{23genn}, the degree-$2$ extension $\K(\bar x,\bar y,\bar w)$ over $\K(\bar x,\bar y)$ ramifies precisely at the places of $\K(\bar x,\bar y)$ lying over $\gamma_2,\gamma_4,\gamma_5,\gamma_6$. Let $\Omega_j$ be the set of places of $\K(\bar x,\bar y,\bar w)$ lying over $\gamma_j$. Note that $|\Omega_1|=|\Omega_3|=2m$ and 
$|\Omega_j|=m$ for each $j$ in $\{2,4,5,6\}$. From Lemma \ref{L14} we have that in $\K(\bar x,\bar y,\bar w)$
\begin{eqnarray*}
\div (\bar x)=\sum_{\gamma \in \Omega_4\cup \Omega_5}2\gamma-\sum_{\gamma \in \Omega_ 2}2\gamma-\sum_{\gamma \in \Omega_1}\gamma,\\
\div (\bar y)=\sum_{\gamma \in \Omega_2\cup \Omega_6}2\gamma-\sum_{\gamma \in \Omega_ 4}2\gamma-\sum_{\gamma \in \Omega_3}\gamma.
\end{eqnarray*}
Also, by \eqref{23genn},
$$
\div (\bar w)=m\Big(\sum_{\gamma\in \Omega_5\cup \Omega_6}\gamma-\sum_{\gamma\in \Omega_2\cup \Omega_4}\gamma\Big).
$$ 
 As regards  $\bar x^m-\bar y^m=\bar u-\bar z$, it is easily seen that in $\K(\bar u, \bar z)$ the rational function $\bar u-\bar z$ has at most $4$ distinct zeros; hence, the set $E'$ of zeros of $\bar x^m-\bar y^m$ in $\K(\bar x,\bar y,\bar w)$ has size at most $8m^2$. Clearly any place of $E$ is contained either in $E'$ or in $\Omega_j$ for some $j=1\ldots,6$, whence $|E|\le 8m^2+8m$.
 
Our assumption on $q$ and $m$, together with Proposition \ref{HaWe}, ensures the existence of at least $8m^2+8m+1$ $\fq$-rational places of $\K(\bar x,\bar y,\bar w)$; hence,
there exists at least one $\fq$-rational place $\gamma_c$ of $\K(\bar x,\bar y,\bar w)$ not in $E$.
Let 
$$
\tilde x=\bar x(\gamma_c),\quad \tilde y=\bar y(\gamma_c), \quad \tilde w=\bar w(\gamma_c).
$$
Note that $P_c=(\tilde x,\tilde y)$ is an $\fq$-rational affine point of the curve with equation $f_{a,b,t,m}(X,Y)=0$. Therefore, by Proposition \ref{allinea}, 
$P$ is collinear with two distinct points
$$
P_{1,c}=\Big(t\tilde x^m, \frac{(t\tilde x^m-1)^3}{t\tilde x^m}\Big),\,P_{2,c}=\Big(t\tilde y^m, \frac{(t\tilde y^m-1)^3}{t\tilde y^m}\Big)\in K_t.
$$
If $c$ is chosen to be a square, then $P$ is external to $P_{1,c}P_{2,c}$; on the other hand, if $c$ is not a square, then $P$ is internal to $P_{1,c}P_{2,c}$. 
This proves the assertion.
  \end{proof}
 
As $m>2$ the coset $K_t$ cannot bicover all the $\fq$-rational affine points in $\cX$. Therefore, unions of  distinct cosets need to be considered.
\begin{proposition}\label{10feb} 
Let $K_{t'}$ be a coset of $K$ such that $K_t\cup K_{t'}$ is an arc. Let $P_0$  be an $\fq$-rational affine point of $\cX$ not belonging to $K_t\cup K_{t'}$ but collinear with a point of $K_t$ and a point of $K_{t'}$.
If \eqref{aritmetica1} holds, then $P_0$ is bicovered by $K_t\cup K_{t'}$. 
\end{proposition}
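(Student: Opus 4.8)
The plan rests on the remark that, $(1,0)$ being an inflection of $\cX$, three points of $G$ with parameters $v,v_1,v_2$ (under $G\cong(\fq^{*},\cdot)$) are collinear exactly when $vv_1v_2=1$: a non-vertical line $Y=\lambda X+\mu$ meets $\cX$ where $(X-1)^3-\lambda X^2-\mu X=0$, a monic cubic whose product of roots is $1$, and every secant of $\cX$ is non-vertical since each vertical line meets $\cX$ in at most one affine point. Write $v_0\in\fq^{*}$ for the parameter of $P_0$. The hypotheses give $v_0\notin t(\fq^{*})^{m}$, $v_0\notin t'(\fq^{*})^{m}$, and (from collinearity of $P_0$ with a point of $K_t$ and a point of $K_{t'}$) $v_0tt'w_1^{m}w_2^{m}=1$ for suitable $w_1,w_2\in\fq^{*}$, so that $v_0tt'\in(\fq^{*})^{m}$; fix $\eta\in\fq^{*}$ with $\eta^{m}=(v_0tt')^{-1}$.

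First I would exhibit a large family of secants of $K_t\cup K_{t'}$ through $P_0$. For each $\tilde x\in\fq^{*}$ put $\tilde y=\eta/\tilde x$; then $v_1:=t\tilde x^{m}$ is the parameter of a point $P_1\in K_t$, $v_2:=t'\tilde y^{m}=(v_0t\tilde x^{m})^{-1}$ is the parameter of a point $P_2\in K_{t'}$, and $v_0v_1v_2=1$, so $P_0,P_1,P_2$ are collinear; they are pairwise distinct outside a finite set of $\tilde x$ (those with $\tilde x^{m}\in\{v_0/t,(v_0^{2}t)^{-1}\}$ or $v_0t^{2}\tilde x^{2m}=1$). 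Taking the $X$-coordinate as affine frame on the (non-vertical) line $P_0P_1P_2$, Segre's criterion \eqref{exto} says that $P_0$ is external (resp.\ internal) to $P_1P_2$ according as
$$
(v_0-v_1)(v_0-v_2)=(v_0-t\tilde x^{m})\Big(v_0-\frac{1}{v_0t\tilde x^{m}}\Big)
$$
is a nonzero square (resp.\ a nonsquare) in $\fq$.

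Then, for a fixed $c\in\fq^{*}$, I would study the curve $\cZ_c$ over $\fq$ given by
$$
W^{2}=c\,v_0t\,X^{m}(v_0-tX^{m})(v_0^{2}tX^{m}-1),
$$
whose right-hand side equals $c\,(v_0tX^{m})^{2}(v_0-tX^{m})\big(v_0-(v_0tX^{m})^{-1}\big)$; hence an $\fq$-rational point $(\tilde x,\tilde w)$ of $\cZ_c$ with $\tilde x\neq0$, $\tilde w\neq0$ gives, via the previous step, an honest secant of $K_t\cup K_{t'}$ through $P_0$ with $(v_0-v_1)(v_0-v_2)=\big(\tilde w/(v_0t\tilde x^{m})\big)^{2}\cdot c^{-1}$, i.e.\ a nonzero square divided by $c$. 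The defining polynomial of $\cZ_c$ has odd valuation $m$ at $X=0$, $-3m$ at $X=\infty$ and $1$ at each of the $2m$ zeros of $(v_0-tX^{m})(v_0^{2}tX^{m}-1)$, which are pairwise distinct and distinct from $0,\infty$ as soon as $v_0^{3}\neq1$ (the case $v_0^{3}=1$ being settled by an immediate direct computation); so $\cZ_c$ is absolutely irreducible and, by Proposition \ref{teo1cor}, of genus $m$, in particular at most $6m^{2}-4m+1$. By Proposition \ref{HaWe}, $\cZ_c$ has at least $q+1-2m\sqrt q$ rational places, and deleting the $O(m)$ ones over $X=0$, $X=\infty$, the $2m$ zeros above, and the $2m$ roots of $v_0t^{2}X^{2m}-1$ leaves, under \eqref{aritmetica1}, at least one survivor. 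Carrying this out with $c$ a nonzero square gives the external secant, and with $c$ a nonsquare the internal secant; hence $P_0$ is bicovered by $K_t\cup K_{t'}$.

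The main obstacle is conceptual: since $P_0\in\cX$ forces $ab=(a-1)^{3}$ for $P_0=(a,b)$ --- exactly the degenerate case excluded throughout Section \ref{sec22} --- the curves $\cC_P$, $\cY_P$ of that section are unavailable, and one must set up and analyse the ad hoc curve $\cZ_c$ directly; this is tractable only because collinearity on $\cX$ collapses to $v_0v_1v_2=1$. The one arithmetic fact one cannot dispense with is $v_0tt'\in(\fq^{*})^{m}$ --- which is precisely the hypothesis that $P_0$ is collinear with a point of $K_t$ and a point of $K_{t'}$ --- since it is what forces $\eta\in\fq^{*}$, hence $\cZ_c$ defined over $\fq$ with an $\fq$-rational point to begin with. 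What remains is the routine accounting of the finitely many degenerate $\tilde x$ and of the exceptional case $v_0^{3}=1$.
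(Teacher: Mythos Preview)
Your approach is essentially the same as the paper's. Both arguments parametrise the pencil of secants of $K_t\cup K_{t'}$ through $P_0$ by a single variable $\tilde x\in\fq^{*}$ via the relation $v_0v_1v_2=1$, reduce the bicovering question to finding $\fq$-rational points on the double cover of the $X$-line defined by $W^{2}=c\,(v_0-tX^{m})\big(v_0-\tfrac{1}{v_0tX^{m}}\big)$ (your $\cZ_c$ differs from the paper's Kummer extension only by the square factor $(v_0tX^{m})^{2}$), bound the genus by $m$, and invoke Hasse--Weil together with \eqref{aritmetica1}. A couple of your precautions are in fact unnecessary: since $K_t$ and $K_{t'}$ are distinct cosets, $v_1\in t(\fq^{*})^{m}$ and $v_2\in t'(\fq^{*})^{m}$ are automatically different, so the condition $v_0t^{2}\tilde x^{2m}=1$ never occurs over $\fq$; likewise $v_0\notin K_t\cup K_{t'}$ rules out $v_0=v_1$ and $v_0=v_2$ for $\fq$-rational $\tilde x$.
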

\begin{proof}
Let $P_0=(u_0,(u_0-1)^3/u_0)$ with $u_0 \neq 0$.
Note that when $P$ ranges over  $K_t$, then the point $Q=\ominus (P_0\oplus P)$ is collinear with  $P_0$ and ranges over $K_{t'}$.  
Recall that  $P$ belongs to $K_t$ if and only if  
$$
P=\Big(tx^m,\frac{(tx^m-1)^3}{tx^m}\Big)
$$ 
 for some $x\in \fq^*$. In this case,  
$$
Q=\Big(\frac{1}{u_0tx^m},\frac{(1-u_0tx^m)^3}{(u_0tx^m)^2}\Big).
$$
Let $\bar x$ be a transcendental element over $\K$.
In order to determine whether $P_0$ is bicovered by $K_t\cup K_{t'}$ we need to investigate whether the following rational function is a non-square in $\K(\bar x)$:
$$
\eta(\bar x)=(u_0-t\bar x^m)\Big(u_0-\frac{1}{u_0tx^m}\Big)=\frac{(u_0-t\bar x^m)(u_0^2t\bar x^m-1)}{u_0t\bar x^m}.
$$
Let $\gamma_0$ and $\gamma_\infty$ be the zero and the pole of $\bar x$ in $\K(\bar x)$, respectively.
Note that both $\gamma_0$ and $\gamma_\infty$ are poles of $\eta(\bar x)$ of multiplicity $m$, since $\gamma_\infty$ is a pole of order $m$ of $(u_0- t{\bar x}^m)$, $(u_0^2t{\bar x}^m-1)$, and $ u_0t{\bar x}^m$; hence, $v_{\gamma_{\infty}}(\eta(\bar x))=-m-m-(-m)=-m$. Also,  $\gamma_0$ is a zero of $u_0t {\bar x}^m$ of multiplicity $m$. As $m$ is odd,  $\eta(\bar x)$ is not a square in $\K(\bar x)$.
Then Proposition \ref{teo1cor} applies to $c\eta(\bar x)$ for each $c\in \fq^*$. Since $\eta(\bar x)$ has exactly two poles, and the number  of its zeros is at most $2m$, the genus of the Kummer extension $\K(\bar x,\bar w)$ of $\K(\bar x)$ with $\bar w^2=c\eta(\bar x)$ is at most $m$.

Our assumption on $q$, together with the Hasse-Weil bound, yield the existence of an $\fq$-rational place $\gamma_c$ of $\K(\bar x,\bar w)$ which is not a zero nor a pole of $\bar w$. 
Let 
$
\tilde x=\bar x(\gamma_c)$, $ \tilde w=\bar w(\gamma_c).
$
Therefore,
$P_0$ is collinear with two distinct points
$$
P(c)=\left(t\tilde x^{m},\frac{(\tilde x^{m}-1)^3)}{t \tilde x ^{m}}\right) \in K_{t},\qquad
Q(c)=\left(\frac{1}{u_0t\tilde x^m},\frac{(1-u_0t\tilde x^m)^3}{(u_0t\tilde x^m)^2}\right)
\in K_{t'}.
$$
If $c$ is chosen to be a square, then $P_0$ is external to $P(c)Q(c)$; on the other hand, if $c$ is not a square, then $P_0$ is internal to $P(c)Q(c)$. 
 \end{proof}

In order to construct bicovering arcs contained in $\cX$, the notion of a maximal-$3$-independent subset of a finite abelian group $\mathcal G$ is needed, as given in \cite{MR1075538}.
A subset $M$ of $\mathcal G$ is said to be {\em maximal }$3$-{\em independent} if 
\begin{itemize}
\item[ (a)] $x_1+x_2+x_3\neq 0$ for all $x_1,x_2,x_3\in M$, and 
\item[(b)] for each $y\in \mathcal G\setminus M$ there exist $x_1,x_2\in M$ with $x_1+x_2+y=0$. 
\end{itemize}
If in (b) $x_1\neq x_2$ can be assumed, then $M$ is said to be {\em good}.
Now, let $M$ be a maximal $3$-independent subset of the factor group $G/K$ containing $K_t$. Then
the union $S$ of the cosets of $K$ corresponding to  $M$  is a good maximal $3$-independent subset of $(G,\oplus)$; see \cite{MR1075538}, Lemma 1, together with Remark 5(5). In geometrical terms, since three points in $G$ are collinear if and only if their sum is equal to the neutral element,
 $S$ is an arc whose secants cover all the points in $G$. By Propositions \ref{contiamo} and \ref{10feb}, if $K$  is large enough with respect to $q$ then $S$ is a bicovering arc as well, and the following result holds. 
\begin{theorem}\label{pre14mar} Let $m$ be a proper divisor of $q-1$ such that $(m,6)=1$ and \eqref{aritmetica1}
holds. Let $K$ be a subgroup of $G$ of index $m$. For $M$  a maximal $3$-independent subset of the factor group $G/K$, the point set
$$
S=\bigcup_{K_{t_i}\in M}K_{t_i}
$$ 
is a bicovering arc in $AG(2,q)$ of size $\#M\cdot \frac{q-1}{m}$.
\end{theorem}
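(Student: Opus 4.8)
The plan is to assemble the statement from the three pieces already developed in this section, namely Propositions \ref{allinea}, \ref{contiamo} and \ref{10feb}, together with the algebraic properties of maximal $3$-independent subsets recalled just before the theorem. First I would fix notation: let $M=\{K_{t_1},\dots,K_{t_r}\}$ be a maximal $3$-independent subset of $G/K$, and let $S=\bigcup_i K_{t_i}$ be the corresponding union of cosets inside $G\subseteq \cX$. The size claim is immediate: the cosets of $K$ are pairwise disjoint and each has cardinality $|K|=(q-1)/m$, so $\#S=\#M\cdot (q-1)/m$.

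Next I would verify that $S$ is an arc, i.e. no three of its points are collinear. Since the neutral element of $(G,\oplus)$ is $(1,0)$ and three points of $G$ are collinear precisely when their $\oplus$-sum is the neutral element, collinearity of three points of $S$ would force the sum of the corresponding three elements (with multiplicity) of $G/K$ to be trivial, i.e. would violate condition (a) of maximal $3$-independence for $M$ — here one uses $(m,6)=1$ and Zirilli's observation, already cited in the introduction, that no three points of a single coset of $K$ are collinear when $3\nmid m$. By the cited Lemma 1 and Remark 5(5) of \cite{MR1075538}, $S$ is in fact a \emph{good} maximal $3$-independent subset of $(G,\oplus)$: every point of $G$ not in $S$ lies on a secant of $S$ joining two \emph{distinct} points of $S$.

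The core of the proof is then completeness and the bicovering property. Completeness: an affine point $P$ of $AG(2,q)$ is either on $\cX$ or off $\cX$. If $P\in G\setminus S$, then since $S$ is maximal $3$-independent in $G$, $P$ lies on a secant of $S$; the finitely many points of $\cX$ not in $G$ (the node and, if present, the points at infinity) are handled as in the cited treatments. If $P\notin \cX$, I would argue $P$ is covered using Proposition \ref{allinea} combined with the existence of a suitable $\fq$-rational point on the curve $\cC_P$, which follows from \eqref{aritmetica1} and the Hasse–Weil bound applied to the component of $\cY_P$ of Proposition \ref{exiexi} (this is exactly what Proposition \ref{contiamo} extracts). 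For the bicovering property, take any $P\in AG(2,q)\setminus S$. If $P$ is off $\cX$, Proposition \ref{contiamo} gives directly that $P$ is bicovered by a single coset $K_t\subseteq S$, hence by $S$. If $P$ is an $\fq$-rational point of $\cX$ not in $S$, then since $S$ is good maximal $3$-independent, $P$ is collinear with some point of a coset $K_{t}\subseteq S$ and some point of a coset $K_{t'}\subseteq S$ with $K_t\oplus K_{t'}$ the coset of $\ominus$(class of $P$); applying Proposition \ref{10feb} to this pair $(K_t,K_{t'})$ — whose union is an arc because $S$ is an arc — shows $P$ is bicovered by $K_t\cup K_{t'}\subseteq S$, hence by $S$. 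In all cases $P$ is bicovered, so $S$ is a bicovering arc.

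The main obstacle I anticipate is not any single deep computation but the careful bookkeeping at the boundary cases: checking that the handful of $\fq$-rational points of $\cX$ lying outside $G$ (the node, and the inflection/ideal points) are covered and, where needed, bicovered — these points are not reached by Propositions \ref{contiamo} or \ref{10feb} as literally stated, which concern points off $\cX$ or $\fq$-rational points \emph{of} $\cX$ that are collinear with points of the relevant cosets. One must confirm that maximality of $M$ in $G/K$ together with the arc structure already forces these exceptional points onto secants of $S$ with the correct external/internal behaviour, or else that they can be absorbed into the coset union without loss; this is routine in spirit but is the step most prone to a gap. Everything else is a direct quotation of the preceding propositions plus the arithmetic hypothesis \eqref{aritmetica1}, which is precisely the inequality guaranteeing enough $\fq$-rational places on the auxiliary curves.
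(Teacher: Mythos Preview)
Your proposal is correct and follows essentially the same route as the paper: show that $S$ is a good maximal $3$-independent subset of $(G,\oplus)$ (hence an arc whose secants cover all of $G$), then invoke Proposition \ref{contiamo} for points off $\cX$ and Proposition \ref{10feb} for affine points of $\cX$ not in $S$. The boundary worry you flag is in fact empty here: for the cubic $XY=(X-1)^3$ the node is the point $Y_\infty=(0:1:0)$ at infinity, this is the \emph{only} point of $\cX$ at infinity, and every affine $\fq$-rational point $(a,b)$ of $\cX$ satisfies $a\neq 0$ (since $a=0$ forces $0=(-1)^3$) and hence lies in $G$ via $a\mapsto (a,(a-1)^3/a)$---so there are no exceptional affine points of $\cX$ outside $G$ to treat.
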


\section{Conclusions}\label{conc}


We use Theorem \ref{pre14mar}, together with the results in Section \ref{bicovar} in order to construct small complete caps in affine spaces $AG(N,q)$.
Note that \eqref{aritmetica1}
holds when
$$
\sqrt q\ge 6m^2-4m+1+\sqrt{36m^4-48m^3+36m^2+1},
$$
which is clearly implied by
$
m\le \frac{\sqrt[4]q}{3.5}.
$

\begin{corollary}\label{lastbut1} Let $m$ be a proper divisor of $q-1$ such that $(m,6)=1$ and 
$m\le \frac{\sqrt[4]q}{3.5}$. Assume that the cyclic group of order $m$ admits a maximal $3$-independent subset of size $s$. Then 
\begin{itemize}
\item[\rm{(i)}] there exists a bicovering arc in $AG(2,q)$ of size $\frac{s(q-1)}{m}$;
\item[\rm{(ii)}] for $N\equiv 0 \pmod 4$, $N\ge 4$, there exists a complete cap in $AG(N,q)$ of size 
$$
\frac{s(q-1)}{m}q^{\frac{N-2}{2}}.
$$
\end{itemize}
\end{corollary}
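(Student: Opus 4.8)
The plan is to derive Corollary \ref{lastbut1} directly from Theorem \ref{pre14mar} together with Proposition \ref{mainP}; essentially the only thing to check is that the hypothesis $m\le\sqrt[4]q/3.5$ implies the arithmetic condition \eqref{aritmetica1}. To see this, I would rewrite \eqref{aritmetica1} as a quadratic inequality in $\sqrt q$, so that it becomes equivalent to $\sqrt q\ge 6m^2-4m+1+\sqrt{36m^4-48m^3+36m^2+1}$. For every integer $m\ge1$ one has $6m^2-4m+1<6m^2$ and, since $48m^3>36m^2+1$, also $36m^4-48m^3+36m^2+1<36m^4$; hence the right-hand side is strictly smaller than $12m^2$. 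Finally $m\le\sqrt[4]q/3.5$ yields $12m^2\le 12\sqrt q/3.5^2<\sqrt q$, so \eqref{aritmetica1} holds.

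Next I would set up the group-theoretic data. As recalled at the beginning of Section \ref{penultima}, one may take the nodal cubic $\cX$ over $\fq$ (with an $\fq$-rational inflection) in canonical form $XY=(X-1)^3$, and with neutral element the affine point $(1,0)$ the group $G$ of non-singular $\fq$-rational points of $\cX$ is cyclic of order $q-1$, an explicit isomorphism with $\fq^*$ being $v\mapsto(v,(v-1)^3/v)$. Since $m$ divides $q-1$, there is a subgroup $K\le G$ of index $m$, and the quotient $G/K$ is cyclic of order $m$; by hypothesis $G/K$ admits a maximal $3$-independent subset $M$ with $\#M=s$. Now \eqref{aritmetica1} holds by the previous step, so Theorem \ref{pre14mar} applies with this $K$ and $M$: the point set $S=\bigcup_{K_{t_i}\in M}K_{t_i}$ is a bicovering arc in $AG(2,q)$ of size $\#M\cdot\frac{q-1}{m}=\frac{s(q-1)}{m}$, which proves (i).

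For (ii) I would simply invoke Proposition \ref{mainP}: fixing any $N\equiv0\pmod4$ with $N\ge4$ and applying it to the bicovering $k$-arc $S$ with $k=\frac{s(q-1)}{m}$, the set $C_S$ is a complete cap in $AG(N,q)$ of size $kq^{(N-2)/2}=\frac{s(q-1)}{m}q^{(N-2)/2}$. There is no real obstacle here: all the substance is already packaged in Theorem \ref{pre14mar} (hence in Propositions \ref{contiamo} and \ref{10feb} and the genus bounds of Section \ref{sec22}) and in Proposition \ref{mainP}, and the only delicate point is the elementary estimate of the first step, together with the remark that a cyclic quotient of $G\cong\fq^*$ of any order dividing $q-1$ is available.
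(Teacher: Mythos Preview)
Your proof is correct and follows exactly the paper's own approach: the paper likewise reduces \eqref{aritmetica1} to the inequality $\sqrt q\ge 6m^2-4m+1+\sqrt{36m^4-48m^3+36m^2+1}$, observes that $m\le\sqrt[4]q/3.5$ suffices, and then invokes Theorem \ref{pre14mar} for (i) and the results of Section \ref{bicovar} (i.e.\ Proposition \ref{mainP}) for (ii). Your only addition is spelling out why the right-hand side is bounded by $12m^2<12.25\,m^2\le\sqrt q$, which the paper leaves implicit.
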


In the case where a group $\mathcal G$ is the direct product of two groups $\mathcal G_1$, $ \mathcal G_2$ of order at least $4$, neither of which elementary $3$-abelian, there exists a maximal $3$-independent subset of $\mathcal G$ of size less than or equal to
$(\#\mathcal G_1)+(\#\mathcal G_2)$; see \cite{MR1221589}. Then Theorem  \ref{duedue} follows at once from Corollary \ref{lastbut1}.

%

\subsection{Comparison with previous results}\label{confronto}

We distinguish two possibilities for the integer $h$ such that $q=p^h$.


\subsubsection{$h\le 8$} 

The best previously known general construction of complete caps in $AG(N,q)$ is that given in \cite{Anb-Giu}, providing complete caps of size approximately $q^{N/2}/3$.
It is often possible to choose $m_1$, $m_2$  as in Theorem \ref{duedue} 
in such a way that the value $(m_1+m_2)/m_1m_2$ is significantly smaller than $1/3$. 

This happens for instance for all $q=p^h$ such that $p-1$ has a composite divisor $m<\sqrt[4]{p}/3.5$ with $(m,6)=1$.

For $p>3$ generic, when $h=8$
a possible choice for $m$ is $m=(p^2-1)/(2^{s}3^{k})$, where $2^{s}\ge 4$ is the highest power of $2$ which divides $p^2-1$, and similarly $3^{k}\ge 3$ is the highest power of $3$ which divides $p^2-1$. 
Assume first that $3$ divides $p-1$, so that $(3,p+1)=1$. 
Then $m=m_1m_2$ where $m_1=(p-1)/(2^{s_1}3^k)$ and $m_2=(p+1)/2^{s_2}$ with $s_1+s_2=s$. 
Then Theorem \ref{duedue} provides complete caps in $AG(N,q)$ of size 
 approximately at most
$$
(2^{s_2}+2^{s_1}3^k)q^{\frac{N}{2}-\frac{1}{8}}.
$$
If $3$ divides $p+1$ a similar bound can be obtained.

\subsubsection{$h>8$} 

The smallest known complete caps in $AG(N,q)$ have size approximately
 $$2q^{N/2}/p^{\lfloor (\lceil h/4 \rceil-1)/2 \rfloor};$$ see \cite[Theorem 6.2]{ABGP}.
Theorem \ref{duedue} provides an improvement on such bound whenever it is possible to choose $m_1,m_2$ so that 
\begin{equation}\label{11aprile}(m_1+m_2)/m_1m_2 < 2/p^{\lfloor (\lceil h/4 \rceil-1)/2 \rfloor}.\end{equation}

This certainly happens for instance when $h\equiv 0 \pmod 8$ and $p$ is large enough. Let $2^{s}\ge 4$ be the highest power of $2$ which divides $\sqrt[4]{q}-1$, and similarly $3^{k}\ge 3$  the highest power of $3$ which divides $\sqrt[4]{q}-1$.
Then arguing as in Case (i) it is easy to see that one can choose $m_1,m_2$ so that
$$
\frac{m_1+m_2}{m_1m_2} \sim (2^{s_2}+2^{s_1}3^k)q^{-\frac{1}{8}},
$$
with $s_1+s_2=s$. On the other hand, $2/p^{\lfloor (\lceil h/4 \rceil-1)/2 \rfloor}=2pq^{-\frac{1}{8}}$.

Another family of $q$'s for which \eqref{11aprile} happens is $q=p^{12}$, with $p\equiv 1 \pmod {12}$ and $(p^2+1)/2$ a composite integer. Assume that $(p^2+1)/2=v_1v_2$ with $v_1,v_2>1$ and $v_1<v_2$. Then choosing $m_1=v_1(p+1)/2$ and $m_2=v_2$ gives
$
(m_1+m_2)/m_1m_2<2/p. 
$   

\end{document}